\def\MR#1{\href{http://www.ams.org/mathscinet-getitem?mr=#1}{MR#1}}
\def\arXiv#1{arXiv:\href{http://arXiv.org/abs/#1}{#1}}
\theoremstyle{plain}
\newtheorem{theorem}{Theorem}
\newtheorem{lemma}[theorem]{Lemma}
\newtheorem{proposition}[theorem]{Proposition}
\numberwithin{theorem}{section}
\numberwithin{equation}{section}
\newcommand{\R}{{\mathbb R}}
\newcommand{\Z}{{\mathbb Z}}
\newcommand{\C}{{\mathbb C}}
\newcommand{\vol}[1]{\mathop{\textup{vol}}\mathopen{}#1\mathclose{}}
\newcommand{\SL}{\mathop{\textup{SL}}}
\newcommand{\cP}{\mathcal{P}}
\newcommand{\cM}{\mathcal{M}}
\newcommand{\UHP}{\mathfrak{h}}
\renewcommand{\Im}{\operatorname{Im}}
\renewcommand{\Re}{\operatorname{Re}}
\title{A conceptual breakthrough in sphere packing}
\author{Henry Cohn}
\thanks{Henry Cohn is principal researcher at
Microsoft Research New England and adjunct professor
of mathematics at the Massachusetts Institute of Technology.
His email address is \texttt{cohn@microsoft.com}.}
\date{}
\begin{document}

\maketitle

On March 14, 2016, the world of mathematics received an extraordinary Pi Day
surprise when Maryna Viazovska posted to the arXiv a solution of the sphere
packing problem in eight dimensions \cite{Viazovska2016}.  Her proof shows
that the $E_8$ root lattice is the densest sphere packing in eight
dimensions, via a beautiful and conceptually simple argument.  Sphere packing
is notorious for complicated proofs of intuitively obvious facts, as well as
hopelessly difficult unsolved problems, so it's wonderful to see a relatively
simple proof of a deep theorem in sphere packing.  No proof of optimality had
been known for any dimension above three, and Viazovska's paper does not even
address four through seven dimensions.  Instead, it relies on remarkable
properties of the $E_8$ lattice. Her proof is thus a notable contribution to
the story of $E_8$, and more generally the story of exceptional structures in
mathematics.

One measure of the complexity of a proof is how long it takes the community
to digest it.  By this standard, Viazovska's proof is remarkably simple.  It
was understood by a number of people within a few days of her arXiv posting,
and within a week it led to further progress: Abhinav Kumar, Stephen D.\
Miller, Danylo Radchenko, and I worked with Viazovska to adapt her methods to
prove that the Leech lattice is an optimal sphere packing in twenty-four
dimensions \cite{CKMRV24}.  This is the only other case above three
dimensions in which the sphere packing problem has been solved.

The new ingredient in Viazovska's proof is a certain special function, which
enforces the optimality of $E_8$ via the Poisson summation formula.  The
existence of such a function had been conjectured by Cohn and Elkies in 2003,
but what sort of function it might be remained mysterious despite
considerable effort. Viazovska constructs this function explicitly in terms
of modular forms by using an unexpected integral transform, which establishes
a new connection between modular forms and discrete geometry.

A landmark achievement like Viazovska's deserves to be appreciated by a broad
audience of mathematicians, and indeed it can be.  In this article we'll take
a look at how her proof works, as well as the background and context. We
won't cover all the details completely, but we'll see the main ideas and how
they fit together. Readers who wish to read a complete proof will then be
well prepared to study Viazovska's paper \cite{Viazovska2016} and the
follow-up work on the Leech lattice \cite{CKMRV24}.  See also de Laat and
Vallentin's survey article and interview \cite{dLV2016} for a somewhat
different perspective, as well as \cite{Cohn2016} and \cite{SPLAG} for
further background and references.

\begin{figure}
\begin{center}
\includegraphics[scale=0.86]{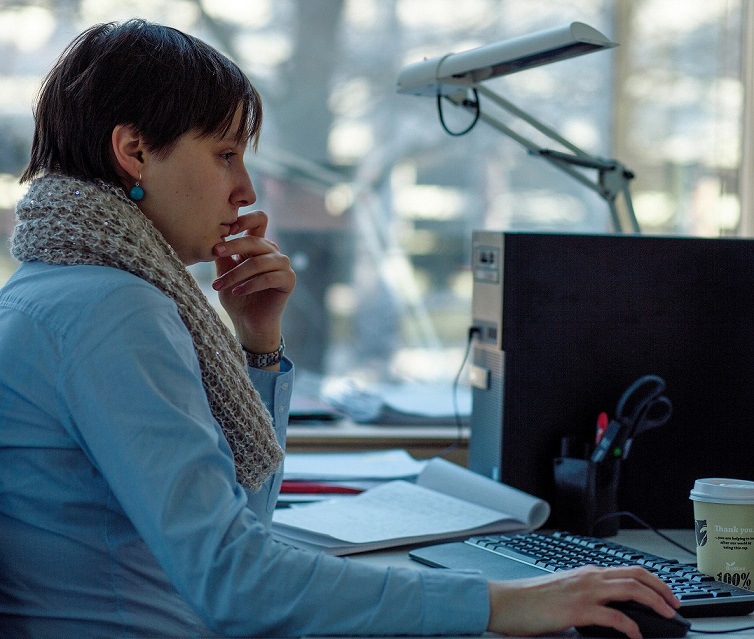}
\end{center}
\caption{Maryna Viazovska solved the sphere packing problem in eight dimensions.}
\label{fig:maryna}
\end{figure}

\begin{figure}
\begin{center}
\includegraphics[scale=0.2]{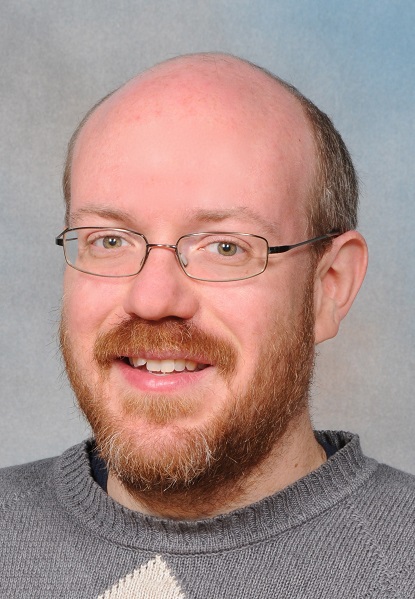}
\includegraphics[scale=0.569825]{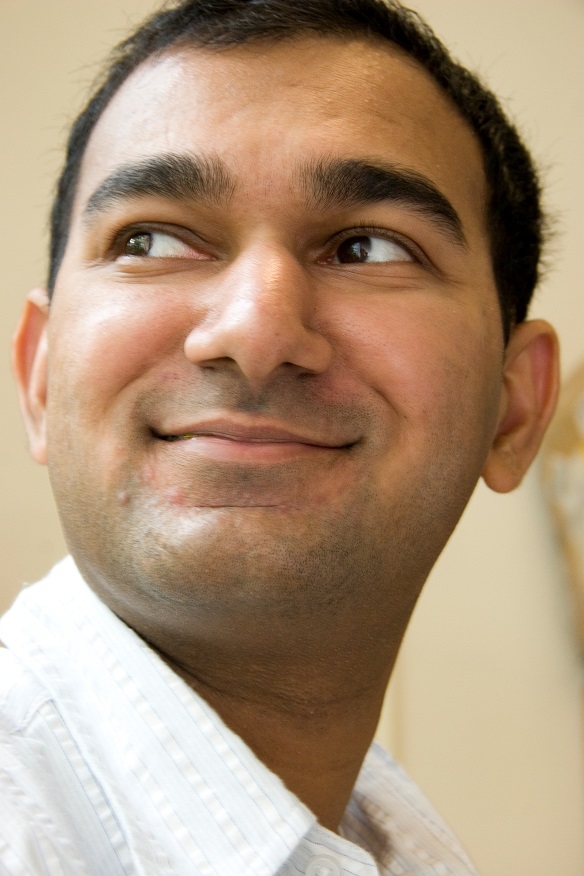}
\includegraphics[scale=0.473236]{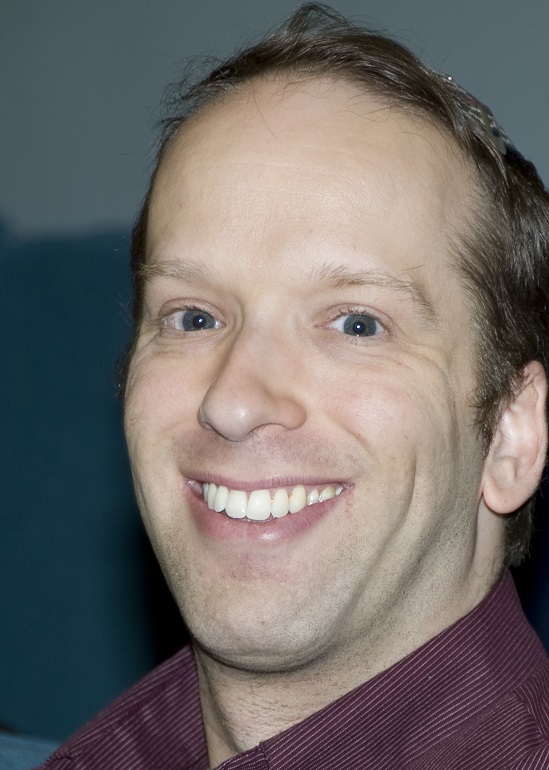}
\includegraphics[scale=0.925852]{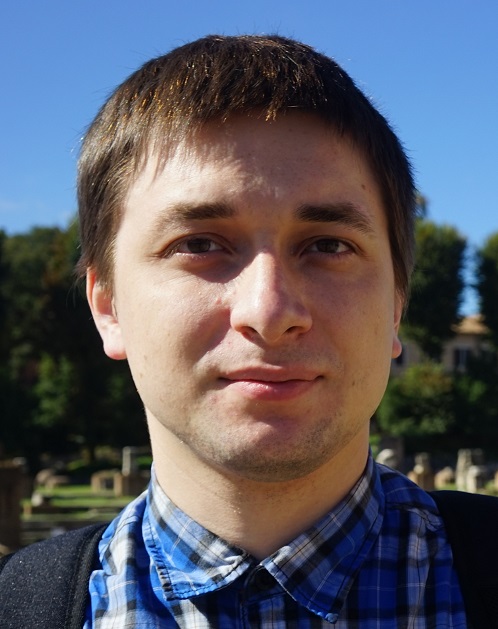}
\end{center}
\caption{Henry Cohn, Abhinav Kumar, Stephen D.\ Miller, and Danylo Radchenko
collaborated with Maryna Viazovska to extend her methods to twenty-four dimensions.}
\label{fig:leech}
\end{figure}

\section{Sphere packing}

The sphere packing problem asks for the densest packing of $\R^n$ with
congruent balls. In other words, what is the largest fraction of $\R^n$ that
can be covered by congruent balls with disjoint interiors?

Pathological packings may not have well-defined densities, but we can handle
the technicalities as follows. A \emph{sphere packing} $\cP$ is a nonempty
subset of $\R^n$ consisting of congruent balls with disjoint interiors. The
\emph{upper density} of $\cP$ is
\[
\limsup_{r \to \infty} \frac{\vol{\big(B_r^n(0) \cap \cP\big)}}{\vol{\big(B_r^n(0)\big)}},
\]
where $B_r^n(x)$ denotes the closed ball of radius $r$ about $x$, and the
\emph{sphere packing density} $\Delta_{\R^n}$ in $\R^n$ is the supremum of
all the upper densities of sphere packings.  In other words, we avoid
technicalities by using a generous definition of the packing density.  This
generosity does not cause any harm, as shown by the theorem of Groemer that
there exists a sphere packing $\cP$ for which
\[
\lim_{r \to \infty} \frac{\vol{\big(B_r^n(x) \cap \cP\big)}}{\vol{\big(B_r^n(x)\big)}} = \Delta_{\R^n}
\]
uniformly for all $x \in \R^n$.  Thus, the supremum of the upper densities is
in fact achieved as the density of some packing, in the nicest possible way.
Of course the densest packing is not unique, since there are any number of
ways to perturb a packing without changing its overall density.

Why should we care about the sphere packing problem?  Two obvious reasons are
that it's a natural geometric problem in its own right and a toy model for
granular materials.  A more surprising application is that sphere packings
are error-correcting codes for a continuous communication channel. Real-world
communication channels can be modeled using high-dimensional vector spaces,
and thus high-dimensional sphere packings have practical importance.

Instead of justifying sphere packing by aspects of the problem or its
applications, we'll justify it by its solutions: a question is good if it has
good answers.  Sphere packing turns out to be a far richer and more beautiful
topic than the bare problem statement suggests.  From this perspective, the
point of the subject is the remarkable structures that arise as dense sphere
packings.

To begin, let's examine the familiar cases of one, two, and three dimensions.
The one-dimensional sphere packing problem is the interval packing problem on
the line, which is of course trivial: the optimal density is $1$.  The two-
and three-dimensional problems are far from trivial, but the optimal
packings, shown in Figure~\ref{fig:low-dimensions}, are exactly what one
would expect.  In particular, the sphere packing density is $\pi/\sqrt{12} =
0.9068\dots$ in $\R^2$ and $\pi/\sqrt{18} = 0.7404\dots$ in $\R^3$. The
two-dimensional problem was solved by Thue.  Giving a rigorous proof requires
a genuine idea, but there exist short, elementary proofs \cite{Hales2000}.
The three-dimensional problem was solved by Hales \cite{Hales2005} via a
lengthy and complex computer-assisted proof, which was extraordinarily
difficult to check but has since been completely verified using formal logic
\cite{FPK}.

In both two and three dimensions, one can obtain an optimal packing by
stacking layers that are packed optimally in the previous dimension, with the
layers nestled together as closely as possible. Guessing this answer is not
difficult, nor is computing the density of such a packing. Instead, the
difficulty lies in proving that no other construction could achieve a greater
density.

\begin{figure}
\begin{center}
\includegraphics[scale=0.163894]{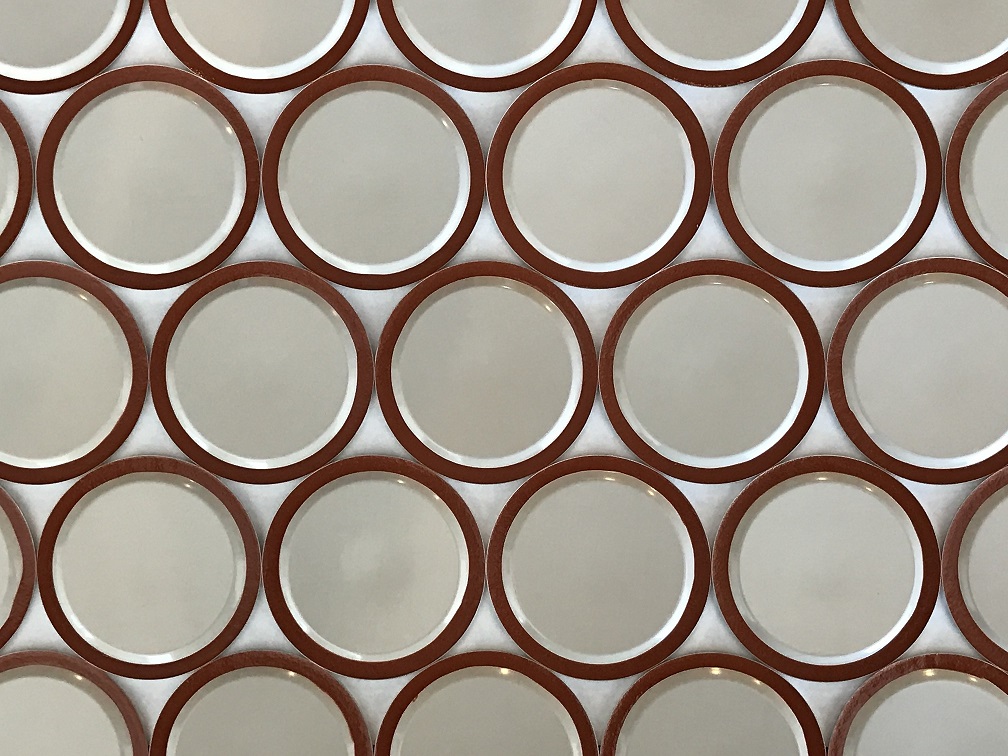}
\includegraphics[scale=0.256]{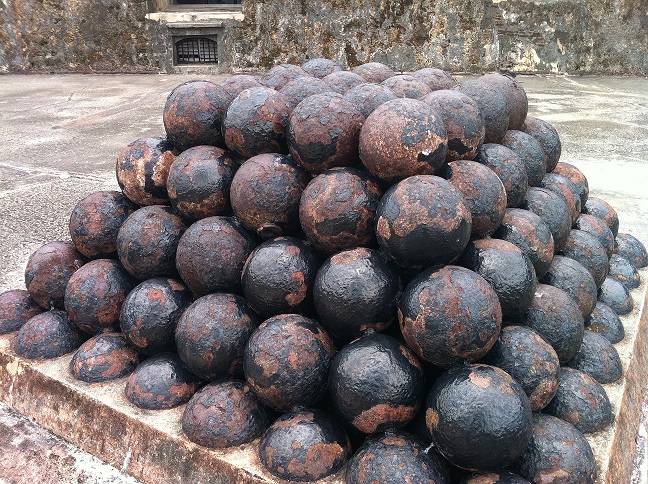}
\end{center}
\caption{Fragments of optimal sphere packings in two and three dimensions,
with density $\pi/\sqrt{12} =
0.9068\dots$ in $\R^2$ and $\pi/\sqrt{18} = 0.7404\dots$ in $\R^3$.}
\label{fig:low-dimensions}
\end{figure}

Unfortunately, our low-dimensional experience is poor preparation for
understanding high-dimensional sphere packing.  Based on the first three
dimensions, it appears that guessing the optimal packing is easy, but this
expectation turns out to be completely false in high dimensions.  In
particular, stacking optimal layers from the previous dimension does not
always yield an optimal packing. (One can recursively determine the best
packings in successive dimensions under such a hypothesis
\cite{ConwaySloane1995}, and this procedure yields a suboptimal packing by
the time it reaches $\R^{10}$.)

The sphere packing problem seems to have no simple, systematic solution that
works across all dimensions. Instead, each dimension has its own
idiosyncracies and charm. Understanding the densest sphere packing in $\R^8$
tells us only a little about $\R^7$ or $\R^9$, and hardly anything about
$\R^{10}$.

Aside from $\R^8$ and $\R^{24}$, our ignorance grows as the dimension
increases. In high dimensions, we have absolutely no idea how the densest
sphere packings behave.  We do not know even the most basic facts, such as
whether the densest packings should be crystalline or disordered. Here ``do
not know'' does not merely mean ``cannot prove,'' but rather the much
stronger ``cannot predict.''

A simple greedy argument shows that the optimal density in $\R^n$ is at least
$2^{-n}$.  To see why, consider any sphere packing in which there is no room
to add even one more sphere.  If we double the radius of each sphere, then
the enlarged spheres must cover space completely, because any uncovered point
could serve as the center of a new sphere that would fit in the original
packing. Doubling the radius multiplies volume by $2^n$, and so the original
packing must cover at least a $2^{-n}$ fraction of $\R^n$.

That may sound appallingly low, but it is very nearly the best lower bound
known. Even the most recent bounds, obtained by Venkatesh
\cite{Venkatesh2013} in 2013, have been unable to improve on $2^{-n}$ by more
than a linear factor in general and an $n \log \log n$ factor in special
cases. As for upper bounds, in 1978 Kabatyanskii and Levenshtein
\cite{KabatyanskiiLevenshtein1978} proved an upper bound of $2^{(-0.599\ldots
+ o(1))n}$, which remains essentially the best upper bound known in high
dimensions. Thus, we know that the sphere packing density decreases
exponentially as a function of dimension, but the best upper and lower bounds
known are exponentially far apart.

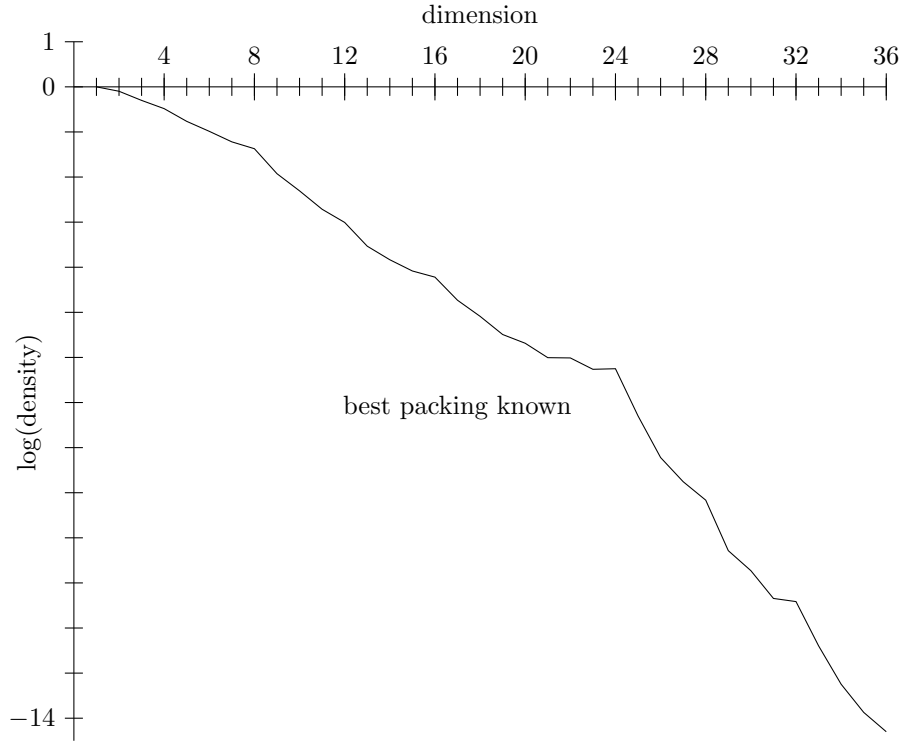
\begin{figure}
\begin{center}
\begin{tikzpicture}[scale=1.2]
\foreach \x in {1,2,3,4,5,6,7,8,9} \draw (\x,-0.15)--(\x,0.15);
\foreach \x in {1,2,3,4,5,6,7,8,9,10,11,12,13,14,15,16,17,18,19,20,21,22,23,24,25,26,27,28,29,30,31,32,33,34,35,36} \draw ({0.25*\x},-0.1)--({0.25*\x},0.1);
\foreach \y in {-14,-13,-12,-11,-10,-9,-8,-7,-6,-5,-4,-3,-2,-1,0,1} \draw (-0.1,{0.5*\y})--(0.1,{0.5*\y});
\draw (4.5,0.6) node[above] {dimension};
\draw (-0.5,-3.5) node[rotate=90] {$\log(\text{density})$};
\draw (4.25,-3.55) node {best packing known};
\draw (1,0.15) node[above] {$4$};
\draw (2,0.15) node[above] {$8$};
\draw (3,0.15) node[above] {$12$};
\draw (4,0.15) node[above] {$16$};
\draw (5,0.15) node[above] {$20$};
\draw (6,0.15) node[above] {$24$};
\draw (7,0.15) node[above] {$28$};
\draw (8,0.15) node[above] {$32$};
\draw (9,0.15) node[above] {$36$};
\draw (-0.1,0.5) node[left] {$1$};
\draw (-0.1,0) node[left] {$0$};
\draw (-0.1,-7) node[left] {$-14$};
\draw (0,-7.25)--(0,0.5);
\draw (0,0)--(9,0);
\draw (0.25,0)--(0.50000,-0.048853)--(0.75000,-0.15022)--(1.0000,-0.24156)--(1.2500,-0.38257)--(1.5000,-0.49315)
--(1.7500,-0.60989)--(2.0000,-0.68586)--(2.2500,-0.96289)--(2.5000,-1.1532)--(2.7500,-1.3572)--(3.0000,-1.5033)
--(3.2500,-1.7666)--(3.5000,-1.9170)--(3.7500,-2.0415)--(4.0000,-2.1097)--(4.2500,-2.3659)--(4.5000,-2.5442)
--(4.7500,-2.7458)--(5.0000,-2.8428)--(5.2500,-3.0026)--(5.5000,-3.0056)--(5.7500,-3.1316)--(6.0000,-3.1252)
--(6.2500,-3.6488)--(6.5000,-4.1100)--(6.7500,-4.3777)--(7.0000,-4.5825)--(7.2500,-5.1425)--(7.5000,-5.3642)
--(7.7500,-5.6721)--(8.0000,-5.7070)--(8.2500,-6.1973)--(8.5000,-6.6231)--(8.7500,-6.9354)--(9.0000,-7.1499);
\end{tikzpicture}
\end{center}
\caption{The sphere packing density is jagged and irregular, with no obvious way to interpolate
    data points from their neighbors.}
\label{fig:densitygraph}
\end{figure}

\begin{table}
\caption{The record sphere packing densities in $\R^n$ with $1 \le n \le 36$, from Table~I.1 of
\cite[pp.~xix--xx]{SPLAG}.  All numbers are rounded down.} \label{table:density}
\begin{center}
\begin{tabular}{cccccccc}
\toprule
$n$ & density & & $n$ & density & & $n$ & density\\
\cmidrule{1-2} \cmidrule{4-5} \cmidrule{7-8}
$1$ & $1.000000000$ & & $13$ & $0.0320142921$ & & $25$ & $0.00067721200977$\\
$2$ & $0.906899682$ & & $14$ & $0.0216240960$ & & $26$ & $0.00026922005043$\\
$3$ & $0.740480489$ & & $15$ & $0.0168575706$ & & $27$ & $0.00015759439072$\\
$4$ & $0.616850275$ & & $16$ & $0.0147081643$ & & $28$ & $0.00010463810492$\\
$5$ & $0.465257613$ & & $17$ & $0.0088113191$ & & $29$ & $0.00003414464690$\\
$6$ & $0.372947545$ & & $18$ & $0.0061678981$ & & $30$ & $0.00002191535344$\\
$7$ & $0.295297873$ & & $19$ & $0.0041208062$ & & $31$ & $0.00001183776518$\\
$8$ & $0.253669507$ & & $20$ & $0.0033945814$ & & $32$ & $0.00001104074930$\\
$9$ & $0.145774875$ & & $21$ & $0.0024658847$ & & $33$ & $0.00000414068828$\\
$10$ & $0.099615782$ & & $22$ & $0.0024510340$ & & $34$ & $0.00000176697388$\\
$11$ & $0.066238027$ & & $23$ & $0.0019053281$ & & $35$ & $0.00000094619041$\\
$12$ & $0.049454176$ & & $24$ & $0.0019295743$ & & $36$ & $0.00000061614660$\\
\bottomrule
\end{tabular}
\end{center}
\end{table}

Table~\ref{table:density} lists the best packing densities currently known in
up to $36$ dimensions, and Figure~\ref{fig:densitygraph} shows a logarithmic
plot.  The plot has several noteworthy features:
\begin{enumerate}
\item The curve is jagged and irregular, with no obvious way to interpolate
    data points from their neighbors.

\item The density is clearly decreasing exponentially, but the irregularity
    makes it unclear how to extrapolate to estimate the decay rate as the
    dimension tends to infinity.

\item There seem to be parity effects.  Even dimensions look slightly
    better than odd dimensions, multiples of four are better yet, and
    multiples of eight are the best of all.

\item Certain dimensions, most notably $24$, have packings so good that
    they seem to pull the entire curve in their direction.  The fact that
    this occurs is not so surprising, since one expects cross sections and
    stackings of great packings to be at least good, but the effect is
    surprisingly large.
\end{enumerate}

\section{Lattices and periodic packings}
\label{sec:lattices}

How can we describe sphere packings?  Random or pathological packings can be
infinitely complicated, but the most important packings can generally be
given a finite description via periodicity.

Recall that a \emph{lattice} in $\R^n$ is a discrete subgroup of rank $n$. In
other words, it consists of the integral span of a basis of $\R^n$.
Equivalently, a lattice is the image of $\Z^n$ under an invertible linear
operator.

A sphere packing $\cP$ is \emph{periodic} if there exists a lattice $\Lambda$
such that $\cP$ is invariant under translation by every element of $\Lambda$.
In that case, the translational symmetry group of $\cP$ must be a lattice,
since it is clearly a discrete group, and $\cP$ consists of finitely many
orbits of this group. A \emph{lattice packing} is a periodic packing in which
the spheres form a single orbit under the translational symmetry group (i.e.,
their centers form a lattice, up to translation).  See
Figure~\ref{fig:periodic} for an illustration.

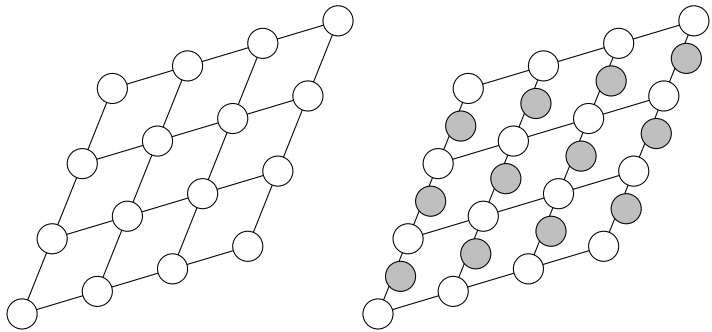
\begin{figure}
\begin{center}
\begin{tikzpicture}
\draw (-1.4,-1.3)--(-0.2,1.7);
\draw (-0.4,-1)--(0.8,2);
\draw (0.6,-0.7)--(1.8,2.3);
\draw (1.6,-0.4)--(2.8,2.6);
\draw (-1.4,-1.3)--(1.6,-0.4);
\draw (-1,-0.3)--(2,0.6);
\draw (-0.6,0.7)--(2.4,1.6);
\draw (-0.2,1.7)--(2.8,2.6);
\foreach \x in {-1,0,1,2}
\foreach \y in {-1,0,1,2}
\fill[white] ({0.4*\x+\y},{\x+0.3*\y}) circle (0.2);
\foreach \x in {-1,0,1,2}
\foreach \y in {-1,0,1,2}
\draw ({0.4*\x+\y},{\x+0.3*\y}) circle (0.2);
\end{tikzpicture}
\begin{tikzpicture}
\draw (-1.4,-1.3)--(-0.2,1.7);
\draw (-0.4,-1)--(0.8,2);
\draw (0.6,-0.7)--(1.8,2.3);
\draw (1.6,-0.4)--(2.8,2.6);
\draw (-1.4,-1.3)--(1.6,-0.4);
\draw (-1,-0.3)--(2,0.6);
\draw (-0.6,0.7)--(2.4,1.6);
\draw (-0.2,1.7)--(2.8,2.6);
\foreach \x in {-1,0,1}
\foreach \y in {-1,0,1,2}
\fill[black!25] ({0.4*\x+\y+0.3},{\x+0.3*\y+0.5}) circle (0.2);
\foreach \x in {-1,0,1,2}
\foreach \y in {-1,0,1,2}
\fill[white] ({0.4*\x+\y},{\x+0.3*\y}) circle (0.2);
\foreach \x in {-1,0,1,2}
\foreach \y in {-1,0,1,2}
\draw ({0.4*\x+\y},{\x+0.3*\y}) circle (0.2);
\foreach \x in {-1,0,1}
\foreach \y in {-1,0,1,2}
\draw ({0.4*\x+\y+0.3},{\x+0.3*\y+0.5}) circle (0.2);
\end{tikzpicture}
\end{center}
\caption{The spheres in a lattice packing form a single orbit under
translation (left), while those in a periodic packing can form several orbits
(right).  The small parallelograms are fundamental cells.}
\label{fig:periodic}
\end{figure}

It is not known whether periodic packings attain the optimal sphere packing
density in each dimension, aside from the five cases in which the sphere
packing problem has been solved. They certainly come arbitrarily close to the
optimal density: given an optimal packing, one can approximate it by taking
the spheres contained in a large box and repeating them periodically
throughout space, and the density loss is negligible if the box is large
enough. However, there seems to be no reason why periodic packings should
reach the exact optimum, and perhaps they don't in high dimensions.

By contrast, lattices probably do not even come arbitrarily close to the
optimal packing density in high dimensions. For example, the best periodic
packing known in $\R^{10}$ is more than 8\% denser than the best lattice
packing known.  Seen in this light, the optimality of lattices in $\R^8$ and
$\R^{24}$ is not a foregone conclusion, but rather an indication that sphere
packing in these dimensions is particularly simple.

To compute the density of a lattice packing, it's convenient to view the
lattice as a tiling of space with parallelotopes (the $n$-dimensional
analogue of parallelograms).  Given a basis $v_1,\dots,v_n$ for a lattice
$\Lambda$, the parallelotope
\[
\{ x_1 v_1 + \cdots + x_n v_n : \text{$0 \le x_i < 1$ for $i=1,2,\dots,n$}\}
\]
is called the \emph{fundamental cell} of $\Lambda$ with respect to this
basis. Translating the fundamental cell by elements of $\Lambda$ tiles
$\R^n$, as in Figure~\ref{fig:periodic}.  From this perspective, a lattice
sphere packing amounts to placing spheres at the vertices of such a tiling.
On a global scale, there is one sphere for each copy of the fundamental cell.
Thus, if the packing uses spheres of radius $r$ and has fundamental cell $C$,
then its density is the ratio
\[
\frac{\vol{\big(B_r^n\big)}}{\vol{(C)}}.
\]

Both factors in this ratio are easily computed if we are given $r$ and $C$.
The volume of a fundamental cell is just the absolute value of the
determinant of the corresponding lattice basis; we will write it as
$\vol{(\R^n/\Lambda)}$, the volume of the quotient torus, to avoid having to
specify a basis. Computing the volume of a ball of radius $r$ in $\R^n$ is a
multivariate calculus exercise, whose answer is
\[
\vol{\big(B_r^n\big)} = \frac{\pi^{n/2}}{(n/2)!}r^n,
\]
where of course $(n/2)!$ means $\Gamma(n/2+1)$ when $n$ is odd.  We can
therefore compute the density of any lattice packing explicitly. The density
of a periodic packing is equally easy to compute: if the packing consists of
$N$ translates of a lattice $\Lambda$ in $\R^n$ and uses spheres of radius
$r$, then its density is
\[
\frac{N \vol{\big(B_r^n\big)}}{\vol{(\R^n/\Lambda)}}.
\]

Of course the density of a packing depends on the radius of the spheres.
Given a lattice with no radius specified, it is standard to use the largest
radius that does not lead to overlap.  The \emph{minimal vector length} of a
lattice $\Lambda$ is the length of the shortest nonzero vector in $\Lambda$,
or equivalently the shortest distance between two distinct points in
$\Lambda$. If the minimal vector length is $r$, then $r/2$ is the largest
radius that yields a packing, since that is the radius at which neighboring
spheres become tangent.

\section{The $E_8$ and Leech lattices}

Many dimensions feature noteworthy sphere packings, but the $E_8$ root
lattice in $\R^8$ and the Leech lattice in $\R^{24}$ are perhaps the most
remarkable of all, with connections to exceptional structures across
mathematics. In this section, we'll construct $E_8$ and prove some of its
basic properties. It was discovered by Korkine and Zolotareff in 1873, in the
guise of a quadratic form they called $W_8$. We'll give a construction much
like Korkine and Zolotareff's but more modern. The Leech lattice
$\Lambda_{24}$, discovered by Leech in 1967, is similar in spirit, but more
complicated. In lieu of constructing it, we will briefly summarize its
properties.

To specify $E_8$, we just need to describe a lattice basis $v_1,\dots,v_8$ in
$\R^8$. Furthermore, only the relative positions of the basis vectors matter,
so all we need to specify is their inner products with each other.  All this
information will be encoded by the \emph{Dynkin diagram}
\begin{center}
\begin{tikzpicture}
\draw (-2.25,0)--(2.25,0);
\draw (-0.75,0)--(-0.75,0.75);
\fill[white] (-2.25,0) circle (0.16);
\fill[white] (-1.5,0) circle (0.16);
\fill[white] (-0.75,0) circle (0.16);
\fill[white] (-0.75,0.75) circle (0.16);
\fill[white] (0,0) circle (0.16);
\fill[white] (0.75,0) circle (0.16);
\fill[white] (1.5,0) circle (0.16);
\fill[white] (2.25,0) circle (0.16);
\draw (-2.25,0) circle (0.16);
\draw (-1.5,0) circle (0.16);
\draw (-0.75,0) circle (0.16);
\draw (-0.75,0.75) circle (0.16);
\draw (0,0) circle (0.16);
\draw (0.75,0) circle (0.16);
\draw (1.5,0) circle (0.16);
\draw (2.25,0) circle (0.16);
\end{tikzpicture}
\end{center}
of $E_8$.  In this diagram, the eight nodes correspond to the basis vectors,
each of squared length $2$.  The inner product between distinct vectors is
$-1$ if the nodes are joined by an edge, and $0$ otherwise.  Thus, if we
number the nodes
\begin{center}
\begin{tikzpicture}
\draw (-2.25,0)--(2.25,0);
\draw (-0.75,0)--(-0.75,0.75);
\fill[white] (-2.25,0) circle (0.16);
\fill[white] (-1.5,0) circle (0.16);
\fill[white] (-0.75,0) circle (0.16);
\fill[white] (-0.75,0.75) circle (0.16);
\fill[white] (0,0) circle (0.16);
\fill[white] (0.75,0) circle (0.16);
\fill[white] (1.5,0) circle (0.16);
\fill[white] (2.25,0) circle (0.16);
\draw (-2.25,0) circle (0.16);
\draw (-1.5,0) circle (0.16);
\draw (-0.75,0) circle (0.16);
\draw (-0.75,0.75) circle (0.16);
\draw (0,0) circle (0.16);
\draw (0.75,0) circle (0.16);
\draw (1.5,0) circle (0.16);
\draw (2.25,0) circle (0.16);
\draw (-2.25,0) node {{\footnotesize 1}};
\draw (-1.5,0) node {{\footnotesize 2}};
\draw (-0.75,0) node {{\footnotesize 3}};
\draw (-0.75,0.75) node {{\footnotesize 4}};
\draw (0,0) node {{\footnotesize 5}};
\draw (0.75,0) node {{\footnotesize 6}};
\draw (1.5,0) node {{\footnotesize 7}};
\draw (2.25,0) node {{\footnotesize 8}};
\end{tikzpicture}
\end{center}
then the Gram matrix of inner products for this basis is given by
\begin{equation} \label{eq:Gram}
\big(\langle v_i,v_j \rangle\big)_{1\le i,j \le 8} =
\begin{bmatrix}
2 & -1 & 0 & 0 & 0 & 0 & 0 & 0\\
-1 & 2 & -1 & 0 & 0 & 0 & 0 & 0\\
0 & -1 & 2 & -1 & -1 & 0 & 0 & 0\\
0 & 0 & -1 & 2 & 0 & 0 & 0 & 0\\
0 & 0 & -1 & 0 & 2 & -1 & 0 & 0\\
0 & 0 & 0 & 0 & -1 & 2 & -1 & 0\\
0 & 0 & 0 & 0 & 0 & -1 & 2 & -1\\
0 & 0 & 0 & 0 & 0 & 0 & -1 & 2
\end{bmatrix}.
\end{equation}

Before we go further, we must address a fundamental question: how do we know
there really are vectors $v_1,\dots,v_8$ with these inner products?  All we
need is for the matrix in \eqref{eq:Gram} to be symmetric and positive
definite, and indeed it is, although it's not obviously positive definite.
That can be checked in several ways. We'll take the pedestrian approach of
observing that the characteristic polynomial of this matrix is
\[
t^8 - 16t^7 + 105t^6 - 364t^5 + 714t^4 - 784t^3 + 440t^2 - 96t + 1,
\]
which clearly has no roots when $t<0$ because every term is then positive.

We can now define the \emph{$E_8$ root lattice} to be the integral span of
$v_1,\dots,v_8$.  We will use this definition to derive several fundamental
properties of $E_8$.  These properties will let us determine its packing
density, and they will also be essential for Viazovska's proof.

The $E_8$ lattice is an \emph{integral lattice}, which means all the inner
products between vectors in $E_8$ are integers.  This follows immediately
from the integrality of the inner products of the basis vectors
$v_1,\dots,v_8$. Even more importantly, $E_8$ is an \emph{even lattice},
which means the squared length of every vector is an even integer.
Specifically, for $m_1,\dots,m_8 \in \Z$ the vector $m_1v_1+\dots+m_8v_8$ has
squared length
\[
|m_1 v_1 + \dots + m_8 v_8|^2 = 2m_1^2 + \dots + 2m_8^2 +
\sum_{1 \le i < j \le 8} 2 m_i m_j \langle v_i,v_j \rangle,
\]
which is visibly even.  Thus, the distances between distinct points in $E_8$
are all of the form $\sqrt{2k}$ with $k=1,2,\dots$, and in fact each of those
distances does occur.

In particular, the distance between neighboring points in $E_8$ is
$\sqrt{2}$, so we can form a packing with spheres of radius $\sqrt{2}/2$ and
density
\[
\frac{\vol{\left(B^8_{\sqrt{2}/2}\right)}}{\vol{(\R^8/E_8)}} = \frac{\pi^4}{384 \vol{(\R^8/E_8)}}.
\]
To compute the density of the $E_8$ packing, all we need to compute is
$\vol{(\R^8/E_8)}$.

To compute this volume, recall that it's the absolute value of the
determinant of the basis matrix:
\[
\vol{(\R^8/E_8)} = \left|\det\begin{bmatrix}
\longleftarrow v_1 \longrightarrow\\
\longleftarrow v_2 \longrightarrow\\
\vdots\\
\longleftarrow v_8 \longrightarrow
\end{bmatrix}\right|.
\]
However, we can write the Gram matrix $\big(\langle v_i,v_j
\rangle\big)_{1\le i,j \le 8}$ as the product
\[
\begin{bmatrix}
\longleftarrow v_1 \longrightarrow\\
\longleftarrow v_2 \longrightarrow\\
\vdots\\
\longleftarrow v_8 \longrightarrow
\end{bmatrix}
\begin{bmatrix}
{\Big\uparrow} & {\Big\uparrow} & & {\Big\uparrow}\\
v_1 & v_2 & \cdots & v_8\\
{\Big\downarrow} & {\Big\downarrow} & & {\Big\downarrow}\\
\end{bmatrix}
\]
of the basis matrix with its transpose, and thus
\[
\det \big(\langle v_i,v_j \rangle\big)_{1\le i,j \le 8} =
\vol{(\R^8/E_8)}^2.
\]
Computing the determinant of the matrix in \eqref{eq:Gram} then shows that
$\vol{(\R^8/E_8)} = 1$. In other words, $E_8$ is a \emph{unimodular} lattice.

Putting together our calculations, we have proved the following proposition:

\begin{proposition}
The $E_8$ lattice packing in $\R^8$ has density $\pi^4/384 = 0.2536\dots$.
\end{proposition}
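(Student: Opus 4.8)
The plan is to assemble the three ingredients already developed above: the radius of the spheres, the volume of a ball of that radius, and the covolume $\vol{(\R^8/E_8)}$. Since $E_8$ is an even lattice whose minimal vector length is $\sqrt{2}$, the largest radius that yields a packing is $r = \sqrt{2}/2$. Substituting $n=8$ and $r=\sqrt{2}/2$ into the ball-volume formula $\vol{\big(B_r^n\big)} = \pi^{n/2} r^n/(n/2)!$ gives
\[
\vol{\left(B^8_{\sqrt{2}/2}\right)} = \frac{\pi^4}{4!}\left(\frac{1}{2}\right)^4 = \frac{\pi^4}{384},
\]
exactly as recorded in the excerpt. So the entire proposition reduces to the single claim that $\vol{(\R^8/E_8)} = 1$.

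For that, I would invoke the identity $\det\big(\langle v_i,v_j\rangle\big) = \vol{(\R^8/E_8)}^2$ established above, which reduces matters to checking that the Gram matrix in \eqref{eq:Gram} has determinant $1$. The cleanest route is a direct evaluation: the matrix is nearly bidiagonal, with the only complication being the extra off-diagonal entries in the third row and column, so a short sequence of integer row operations (or cofactor expansion along a sparse row) clears it out. Alternatively, one can recognize this matrix as the Cartan matrix of the root system $E_8$, whose determinant is classically equal to $1$; but in the pedestrian spirit of the surrounding discussion, the bare computation is more in keeping. Either way it is a finite check with no subtlety beyond bookkeeping.

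Dividing then finishes the argument: the density of the $E_8$ lattice packing is
\[
\frac{\vol{\left(B^8_{\sqrt{2}/2}\right)}}{\vol{(\R^8/E_8)}} = \frac{\pi^4/384}{1} = \frac{\pi^4}{384},
\]
and numerically $\pi^4/384 = 0.2536\dots$. The only place where any genuine computation occurs is the $8 \times 8$ determinant evaluation, so that is the main (and quite mild) obstacle; everything else is substitution into formulas already in hand.
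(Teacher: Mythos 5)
Your proposal is correct and follows essentially the same route as the paper: spheres of radius $\sqrt{2}/2$ from the minimal vector length, the ball-volume formula giving $\pi^4/384$, and $\vol{(\R^8/E_8)}=1$ via $\det\big(\langle v_i,v_j\rangle\big)=\vol{(\R^8/E_8)}^2$ applied to the Gram matrix \eqref{eq:Gram}. The only ingredient to keep explicit is that vectors of squared length $2$ actually occur in $E_8$ (so the minimal length is exactly $\sqrt{2}$), which the paper notes just before the density computation.
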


Our calculations so far have led us to what turns out to be the densest
sphere packing in $\R^8$, but it's not obvious from this construction that
$E_8$ is an especially interesting lattice. The $E_8$ lattice is in fact
magnificently symmetrical, far more so than one might naively guess based on
its lopsided Dynkin diagram. Its symmetry group is the $E_8$ Weyl group,
which is generated by reflections in the hyperplanes orthogonal to each of
$v_1,\dots,v_8$.  We will not make use of this group, but it's important to
keep in mind that the lattice itself is far more symmetrical than its
definition.  This is a common pattern when defining highly symmetrical
objects.

Our density calculation for $E_8$ was based on its being an even unimodular
lattice.  In fact, $E_8$ is the unique even unimodular lattice in $\R^8$, up
to orthogonal transformations.  Even unimodular lattices exist only when the
dimension is a multiple of eight, and they play a surprisingly large role in
the theory of sphere packing.

\begin{figure}
\begin{center}
\includegraphics[scale=0.28]{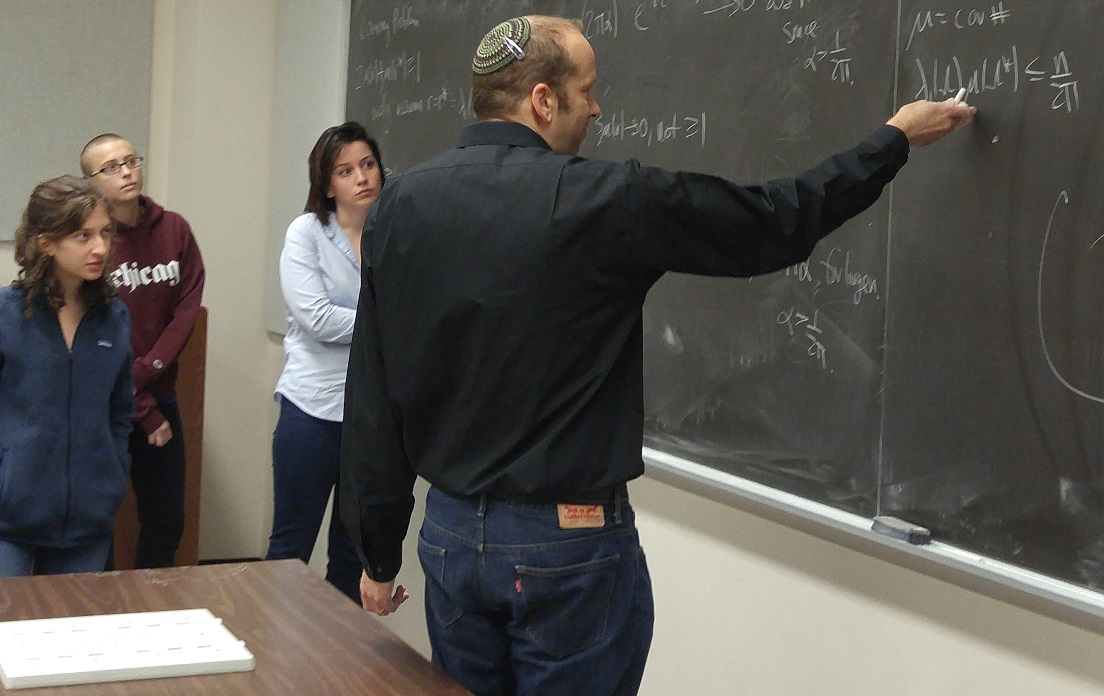}
\end{center}
\caption{Stephen D.\ Miller explains dual lattices and transference theorems
to his graduate class on the geometry of numbers.}
\label{fig:steve-class}
\end{figure}

The last property of $E_8$ we will need for Viazovska's proof is that it is
its own dual lattice, a concept we will define shortly.  Given a lattice
$\Lambda$ with basis $v_1,\dots,v_n$, let $v_1^*,\dots,v_n^*$ be the dual
basis with respect to the usual inner product. In other words,
\[
\langle v_i, v_j^* \rangle = \begin{cases} 1 & \text{if $i=j$, and}\\
0 & \text{otherwise.}
\end{cases}
\]
Then the \emph{dual lattice} $\Lambda^*$ of $\Lambda$ is the lattice with
basis $v_1^*,\dots,v_n^*$.  It is not difficult to check that $\Lambda^*$ is
independent of the choice of basis for $\Lambda$; one basis-free way to
characterize it is that
\begin{equation} \label{eq:dual-lattice}
\Lambda^* = \{y \in \R^n : \text{$\langle x,y \rangle \in \Z$ for all $x \in \Lambda$}\}.
\end{equation}
The self-duality $E_8^*=E_8$ is a consequence of the following lemma:

\begin{lemma}
Every integral unimodular lattice $\Lambda$ satisfies $\Lambda^*=\Lambda$.
\end{lemma}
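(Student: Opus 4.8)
The plan is to sandwich $\Lambda$ between itself and $\Lambda^*$ and then show the two lattices have the same covolume, which forces equality. The inclusion $\Lambda \subseteq \Lambda^*$ is immediate from integrality: if $x,y \in \Lambda$ then $\langle x,y \rangle \in \Z$, so every $x \in \Lambda$ satisfies the membership condition in \eqref{eq:dual-lattice}. (This already shows integrality alone gives $\Lambda \subseteq \Lambda^*$; unimodularity will be what upgrades this to equality.)

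Next I would compute $\vol{(\R^n/\Lambda^*)}$ in terms of $\vol{(\R^n/\Lambda)}$. Writing $M$ for the matrix whose rows are a basis $v_1,\dots,v_n$ of $\Lambda$, the defining relations $\langle v_i, v_j^* \rangle = \delta_{ij}$ say precisely that the matrix $N$ with rows $v_1^*,\dots,v_n^*$ satisfies $M N^{\mathsf T} = I$, so $N = (M^{\mathsf T})^{-1}$ and $|\det N| = 1/|\det M|$. Hence $\vol{(\R^n/\Lambda^*)} = 1/\vol{(\R^n/\Lambda)}$, and since $\Lambda$ is unimodular both sides equal $1$, so $\vol{(\R^n/\Lambda^*)} = \vol{(\R^n/\Lambda)}$.

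Finally I would invoke the elementary index formula: if $\Lambda \subseteq \Lambda^*$ are lattices of full rank $n$, then $\vol{(\R^n/\Lambda)} = [\Lambda^* : \Lambda]\,\vol{(\R^n/\Lambda^*)}$, which one sees by tiling a fundamental cell of $\Lambda$ by $[\Lambda^*:\Lambda]$ translates of a fundamental cell of $\Lambda^*$. Equal covolumes then force $[\Lambda^*:\Lambda] = 1$, i.e.\ $\Lambda = \Lambda^*$.

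Almost every step is routine linear algebra, and the only point needing a moment's care is the index–covolume comparison in the last step, since that is what actually converts the inclusion $\Lambda \subseteq \Lambda^*$ into an equality; the unimodularity hypothesis enters solely to pin the common covolume to $1$.
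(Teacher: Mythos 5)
Your argument is correct and is essentially the paper's own proof: integrality gives $\Lambda \subseteq \Lambda^*$, the dual basis matrix being the inverse transpose gives $\vol{(\R^n/\Lambda^*)} = 1/\!\vol{(\R^n/\Lambda)}$, and the index--covolume formula together with unimodularity forces $[\Lambda^*:\Lambda]=1$. The only cosmetic difference is that the paper records the index as $\vol{(\R^n/\Lambda)}^2$ before setting it to $1$, whereas you set both covolumes to $1$ first; the content is identical.
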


\begin{proof}
Let $v_1,\dots,v_n$ be a basis of $\Lambda$, and $v_1^*,\dots,v_n^*$ the dual
basis of $\Lambda^*$. By construction, the basis matrix formed by
$v_1^*,\dots,v_n^*$ is the inverse of the transpose of that formed by
$v_1,\dots,v_n$, and hence
$\vol{(\R^n/\Lambda^*)} = 1/\!\vol{(\R^n/\Lambda)}$. % ad hoc \!
If $\Lambda$ is an integral lattice, then $\Lambda \subseteq \Lambda^*$, and
the index of $\Lambda$ in $\Lambda^*$ is given by
\[
[\Lambda^* : \Lambda] = \vol(\R^n/\Lambda)/\!\vol{(\R^n/\Lambda^*)} =
\vol{(\R^n/\Lambda)}^2.
\]
If $\Lambda$ is unimodular as well, then $[\Lambda^* : \Lambda]=1$ and hence
$\Lambda^* = \Lambda$.
\end{proof}

As mentioned above, the Leech lattice $\Lambda_{24}$ is similar to $E_8$ but
more elaborate.  It's an even unimodular lattice in $\R^{24}$, but this time
with no vectors of length $\sqrt{2}$, and it's the unique lattice with these
properties, up to orthogonal transformations. The nonzero vectors in
$\Lambda_{24}$ have lengths $\sqrt{2k}$ for $k=2,3,\dots$, and of course
$\Lambda_{24}^* = \Lambda_{24}$ because $\Lambda_{24}$ is integral and
unimodular. One noteworthy property of $\Lambda_{24}$ is that it's chiral:
all of its symmetries are orientation-preserving, and the Leech lattice
therefore occurs in left-handed and right-handed variants, which are mirror
images of each other. (By contrast, the symmetry group of $E_8$ is generated
by reflections, so $E_8$ is certainly not chiral.)

The sphere packing density of the Leech lattice is
\[
\frac{\vol{\left(B^{24}_{1}\right)}}{\vol{(\R^{24}/\Lambda_{24})}} = \frac{\pi^{12}}{12!} = 0.001929\dots,
\]
which looks awfully low, but keep in mind that the optimal density decreases
exponentially as a function of dimension.  In fact, the density of the Leech
lattice is remarkably high, as one can see from Figure~\ref{fig:densitygraph}
and Table~\ref{table:density}.  For comparison, the best density known in
$\R^{23}$ is $0.001905\dots$, which is lower than the density of the Leech
lattice, and this is the only case in which the density increases from one
dimension to the next in Table~\ref{table:density}.

\section{Linear programming bounds}

The underlying technique used in Viazovska's proof is \emph{linear
programming bounds} for the sphere packing density in $\R^n$.  These upper
bounds were developed by Cohn and Elkies \cite{CohnElkies2003}, based on
several decades of previous work initiated by Delsarte and extended by
numerous mathematicians.  In this approach to sphere packing, one uses
auxiliary functions with certain properties to obtain density bounds.
Viazovska's breakthrough consists of a new technique for constructing these
auxiliary functions, but before we turn to her proof let's examine the
general theory and review how the bounds work.  We will see that the general
bounds do not refer to special dimensions such as eight and twenty-four,
which makes it all the more remarkable that they can be used to solve the
sphere packing problem in these dimensions.

\begin{figure}
\begin{center}
\includegraphics[scale=0.25]{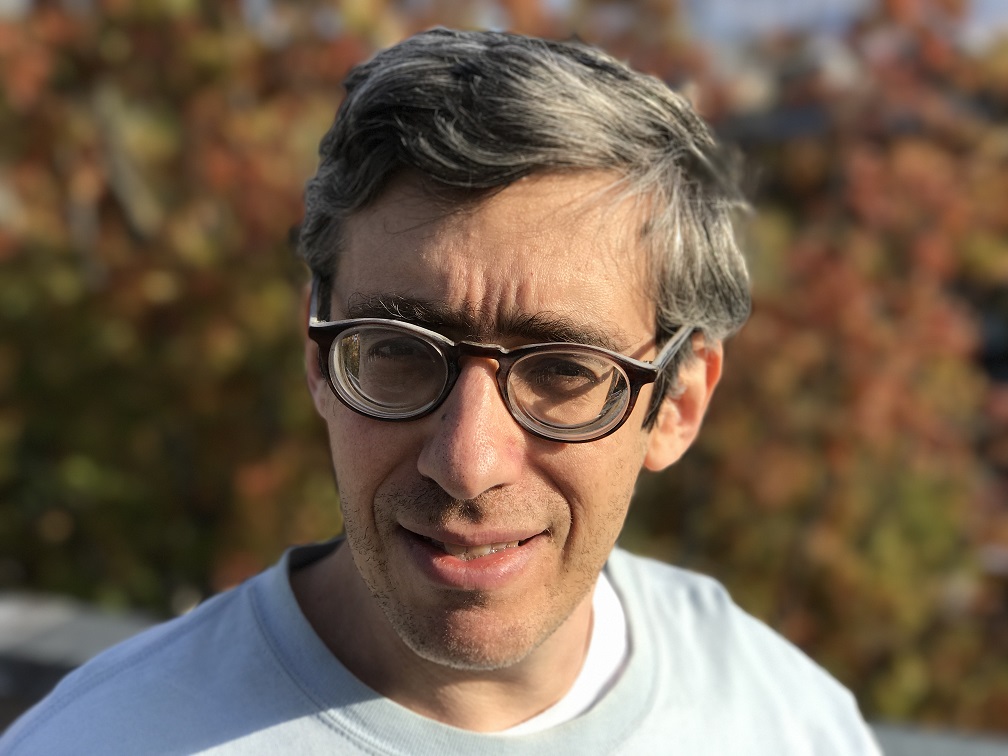}
\end{center}
\caption{Noam Elkies developed the linear programming bounds for sphere
packing with Henry Cohn.}
\label{fig:noam}
\end{figure}

Linear programming bounds are based on harmonic analysis.  That may sound
surprising, since sphere packing is a problem in discrete geometry, which at
first glance seems to have little to do with the continuous problems studied
in harmonic analysis.  However, there is a deep connection between these
fields, because the Fourier transform is essential for understanding the
action of the additive group $\R^n$ on itself by translation, so much so that
one can't truly understand lattices without harmonic analysis.

Define the \emph{Fourier transform} $\widehat{f}$ of an integrable function
$f \colon \R^n \to \R$ by
\[
\widehat{f}(y) = \int_{\R^n} f(x) e^{-2\pi i \langle x,y \rangle} \, dx.
\]
\emph{Fourier inversion} tells us that if $\widehat{f}$ is integrable as
well, then one can similarly recover $f$ from $\widehat{f}$:
\begin{equation} \label{eq:fourier-inversion}
f(x) = \int_{\R^n} \widehat{f}(y) e^{2\pi i \langle x,y \rangle} \, dy
\end{equation}
almost everywhere.  In other words, the Fourier transform gives the unique
coefficients needed to express $f$ in terms of complex exponentials.

To avoid analytic technicalities, we will focus on Schwartz functions. Recall
that $f \colon \R^n \to \R$ is a \emph{Schwartz function} if $f$ is
infinitely differentiable, $f(x) = O\big((1+|x|)^{-k}\big)$ for all
$k=1,2,\dots$, and the same holds for all the partial derivatives of $f$ (of
every order). Schwartz functions behave particularly well, well enough to
justify everything we'd like to do with them, and they are closed under the
Fourier transform. We could get by with weaker hypotheses, but in fact
Viazovska's construction produces Schwartz functions, so we might as well
focus on that case.

The significance of the Fourier transform in sphere packing is that it
diagonalizes the operation of translation by any vector.  Specifically,
\eqref{eq:fourier-inversion} implies that
\[
f(x+t) = \int_{\R^n} \widehat{f}(y) e^{2\pi i \langle t,y \rangle} e^{2\pi i \langle x,y \rangle}\, dy,
\]
which means that translating the input to the function $f$ by $t$ amounts to
multiplying its Fourier transform $\widehat{f}(y)$ by $e^{2\pi i \langle t,y
\rangle}$.  Simultaneously diagonalizing all these translation operators
makes the Fourier transform an ideal tool for studying periodic structures.

The key technical tool behind linear programming bounds is the \emph{Poisson
summation formula}, which expresses a duality between summing a function over
a lattice and summing the Fourier transform over the dual lattice, as defined
in \eqref{eq:dual-lattice}.  Poisson summation says that if $f$ is a Schwartz
function, then
\begin{equation} \label{eq:poisson}
\sum_{x \in \Lambda} f(x) = \frac{1}{\vol{(\R^n/\Lambda)}} \sum_{y \in \Lambda^*} \widehat{f}(y).
\end{equation}
In other words, summing $\widehat{f}$ over $\Lambda^*$ is almost the same as
summing $f$ over $\Lambda$, with the only difference being a factor of
$\vol{(\R^n/\Lambda)}$.  When expressed in this form, Poisson summation looks
mysterious, but it becomes far more transparent when written in the
translated form
\begin{equation} \label{eq:poisson-general}
\sum_{x \in \Lambda} f(x+t) = \frac{1}{\vol{(\R^n/\Lambda)}} \sum_{y \in \Lambda^*} \widehat{f}(y) e^{2\pi i \langle y,t \rangle}.
\end{equation}
This equation reduces to \eqref{eq:poisson} when $t=0$, and it has a simple
proof.  As a function of $t$, the left side of \eqref{eq:poisson-general} is
periodic modulo $\Lambda$, while the right side is its Fourier series.  In
particular, the right side uses exactly the complex exponentials $t \mapsto
e^{2\pi i \langle y,t \rangle}$ that are periodic modulo $\Lambda$, namely
those with $y \in \Lambda^*$ (as follows easily from
\eqref{eq:dual-lattice}). Orthogonality let us compute the coefficient of
such an exponential, and some manipulation yields
$\widehat{f}(y)/\!\vol{(\R^n/\Lambda)}$.  % ad hoc \!

Now we can state and prove the linear programming bounds, which show how to
convert a certain sort of auxiliary function into a sphere packing bound.
Specifically, we will use functions $f \colon \R^n \to \R$ such that $f$ is
eventually nonpositive (i.e., there exists a radius $r$ such that $f(x) \le
r$ for $|x| \ge r$) while $\widehat{f}$ is nonnegative everywhere.

\begin{theorem}[Cohn and Elkies \cite{CohnElkies2003}] \label{thm:LPbounds}
Let $f \colon \R^n \to \R$ be a Schwartz function and $r$ a positive real
number such that $f(0) = \widehat{f}(0) > 0$, $\widehat{f}(y) \ge 0$ for all
$y \in \R^n$, and $f(x) \le 0$ for $|x| \ge r$.  Then the sphere packing
density in $\R^n$ is at most $\vol{\big(B_{r/2}^n\big)}$.
\end{theorem}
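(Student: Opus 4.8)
The plan is to establish the bound first for periodic packings and then appeal to the fact, noted in Section~\ref{sec:lattices}, that periodic packings come arbitrarily close to the optimal density. So suppose $\cP$ is a periodic packing with spheres of radius $s$, consisting of $N$ translates of a lattice $\Lambda$; say the sphere centers are $t_1,\dots,t_N$ together with all their $\Lambda$-translates. The density of $\cP$ is then $N\vol{\big(B_s^n\big)}/\!\vol{(\R^n/\Lambda)}$, so it suffices to show that $N/\!\vol{(\R^n/\Lambda)} \le 1$ once we arrange the radius correctly; more precisely, I would show $N \le \vol{(\R^n/\Lambda)}$ whenever the minimum distance between distinct centers is at least $r$, since then scaling lets us take $s = r/2$ and recover $\vol{\big(B_{r/2}^n\big)}$ as the bound.

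The heart of the argument is to apply Poisson summation in the translated form \eqref{eq:poisson-general} to $f$, summed over $\Lambda$ and shifted by the differences $t_j - t_k$, and then sum over all $j,k$. Concretely, I would consider
\[
S = \sum_{j=1}^N \sum_{k=1}^N \sum_{x \in \Lambda} f\big(x + t_j - t_k\big)
= \frac{1}{\vol{(\R^n/\Lambda)}} \sum_{y \in \Lambda^*} \widehat{f}(y) \left| \sum_{j=1}^N e^{2\pi i \langle y, t_j \rangle} \right|^2,
\]
where the right-hand side comes from \eqref{eq:poisson-general} after recognizing that the cross terms $e^{2\pi i \langle y, t_j - t_k\rangle}$ assemble into the squared modulus of an exponential sum. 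Now estimate the two sides in opposite directions. On the right, every term is nonnegative because $\widehat{f} \ge 0$ and $|\cdot|^2 \ge 0$, so the sum is at least its $y = 0$ term, which is $N^2 \widehat{f}(0)/\!\vol{(\R^n/\Lambda)}$. On the left, the term with $x = 0$ and $j = k$ contributes $N f(0)$; every other term has $x + t_j - t_k$ equal to a difference of two distinct centers of $\cP$ (or a nonzero lattice vector), hence has length at least $r$, so $f$ evaluated there is $\le 0$. Therefore $S \le N f(0)$.

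Combining the two estimates gives $N^2 \widehat{f}(0)/\!\vol{(\R^n/\Lambda)} \le N f(0)$, and since $f(0) = \widehat{f}(0) > 0$ we may cancel to obtain $N \le \vol{(\R^n/\Lambda)}$, which is exactly what we wanted. The step I expect to require the most care is the bookkeeping in the double sum: one must check that the only pair $(x,j,k)$ producing an argument of length $<r$ is the diagonal term $x = 0$, $j = k$ — this uses that the $t_j$ are distinct modulo $\Lambda$ and that the packing condition forces all center-to-center distances to be at least $r$ — and one must also justify interchanging the finite sums over $j,k$ with Poisson summation, which is routine since $f$ is Schwartz and the outer sums are finite. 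Finally, to pass from periodic packings to arbitrary ones, I would invoke the approximation remark from Section~\ref{sec:lattices}: any packing can be approximated in density by a periodic one, so the bound $\vol{\big(B_{r/2}^n\big)}$ for all periodic packings forces the same bound on $\Delta_{\R^n}$.
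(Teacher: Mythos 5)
Your proof is correct and follows essentially the same route as the paper: translated Poisson summation applied to the double sum over center differences, dropping all terms except $x=0$, $j=k$ on one side and $y=0$ on the other, then passing from periodic packings to the general case by density approximation. The only cosmetic difference is that the paper first runs the argument for lattice packings as a warm-up before the general periodic case, which you handle directly.
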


The name ``linear programming'' refers to optimizing a linear function
subject to linear constraints.  The optimization problem of choosing $f$ so
as to minimize $r$ can be rephrased as an infinite-dimensional linear program
after a change of variables, but we will not adopt that perspective here.

\begin{proof}
The proof consists of applying the contrasting inequalities $f(x) \le 0$ and
$\widehat{f}(y) \ge 0$ to the two sides of Poisson summation. We will begin
by proving the theorem for lattice packings, which is the simplest case.

Suppose $\Lambda$ is a lattice in $\R^n$, and suppose without loss of
generality that the minimal vector length of $\Lambda$ is $r$ (since the
sphere packing density is invariant under rescaling). In other words, the
packing uses balls of radius $r/2$, and its density is
\[
\frac{\vol{\big(B_{r/2}^n\big)}}{\vol{(\R^n/\Lambda)}}.
\]
Proving the desired density bound $\vol{\big(B_{r/2}^n\big)}$ for $\Lambda$
amounts to showing that $\vol{(\R^n/\Lambda)} \ge 1$. By Poisson summation,
\begin{equation} \label{eq:apply-poisson}
\sum_{x \in \Lambda} f(x) = \frac{1}{\vol{(\R^n/\Lambda)}} \sum_{y \in \Lambda^*} \widehat{f}(y).
\end{equation}
Now the inequality $f(x) \le 0$ for $|x|\ge r$ tells us that the left side of
\eqref{eq:apply-poisson} is bounded above by $f(0)$, and the inequality
$\widehat{f}(y) \ge 0$ tells us that the right side is bounded below by
$\widehat{f}(0)/\!\vol{(\R^n/\Lambda)}$.  % ad hoc \!
It follows that
\[
f(0) \ge \frac{\widehat{f}(0)}{\vol{(\R^n/\Lambda)}},
\]
which yields $\vol{(\R^n/\Lambda)} \ge 1$ because $f(0) = \widehat{f}(0)
> 0$.

The general case is almost as simple, but the algebraic manipulations are a
little trickier.  Because periodic packings come arbitrarily close to the
optimal sphere packing density, without loss of generality we can consider a
periodic packing using balls of radius $r/2$, centered at the translates of a
lattice $\Lambda \subseteq \R^n$ by vectors $t_1,\dots,t_N$.  The packing
density is
\[
\frac{N\vol{\big(B_{r/2}^n\big)}}{\vol{(\R^n/\Lambda)}},
\]
and so we wish to prove that $\vol{(\R^n/\Lambda)} \ge N$.

We will use the translated Poisson summation formula
\eqref{eq:poisson-general}, which after a little manipulation implies that
\[
\sum_{j,k=1}^N \sum_{x \in \Lambda} f(t_j-t_k+x) =
\frac{1}{\vol{(\R^n/\Lambda)}} \sum_{y \in \Lambda^*} \widehat{f}(y) \left|\sum_{j=1}^N e^{2\pi i \langle y,t_j \rangle}\right|^2.
\]
Again we apply the contrasting inequalities on $f$ and $\widehat{f}$ to the
left and right sides, respectively.  On the left, we obtain an upper bound by
throwing away every term except when $j=k$ and $x=0$; on the right, we obtain
a lower bound by throwing away every term except when $y=0$.  Thus,
\[
N f(0) \ge \frac{N^2}{\vol{(\R^n/\Lambda)}} \widehat{f}(0),
\]
which implies that $\vol{(\R^n/\Lambda)} \ge N$ and hence that the density is
at most $\vol{\big(B_{r/2}^n\big)}$, as desired.
\end{proof}

This proof technique may look absurdly inefficient.  We start with Poisson
summation, which expresses a deep duality, and then we recklessly throw away
all the nontrivial terms, leaving only the contributions from the origin. One
practical justification is that we have little choice in the matter, since we
don't know what the other terms are (they all depend on the lattice). A
deeper justification is that the omitted terms are generally small, and
sometimes zero, so omitting them is not as bad as it sounds.

To apply Theorem~\ref{thm:LPbounds}, we must choose an auxiliary function
$f$.  The theorem then shows how to obtain a density bound from $f$, but it
says nothing about how to choose $f$ so as to minimize $r$ and hence minimize
the density bound. Sadly, optimizing the auxiliary function remains an
unsolved problem, and the best possible choice of $f$ is known only when
$n=1$, $8$, or $24$.

As a first step towards solving this problem, note that we can radially
symmetrize $f$, so that $f(x)$ depends only on $|x|$, because all the
constraints on $f$ are linear and rotationally invariant.  Then $f$ is really
a function of one radial variable, as is $\widehat{f}$.  Functions of one
variable feel like they should be tractable, but this optimization problem
turns out to be impressively subtle.

If we can't fully optimize the choice of $f$, then what can we do?  Several
explicit constructions are known, but in general we must resort to numerical
computation.  For this purpose, it's convenient to use auxiliary functions of
the form $f(x) = p(|x|^2) e^{-\pi |x|^2}$, where $p$ is a polynomial.  These
functions are flexible enough to approximate arbitrary radial Schwartz
functions, but simple enough to be tractable.  Numerical optimization then
yields a high-precision approximation to the linear programming bound, which
is shown in Figure~\ref{fig:LPgraph} and Table~\ref{table:lpbound}.

\section{The hunt for the magic functions}

The most striking property of Figure~\ref{fig:LPgraph} is that the upper and
lower bounds in $\R^n$ seem to touch when $n=8$ or $24$.  In other words,
there should be \emph{magic auxiliary functions} that solve the sphere
packing problem in these dimensions, by achieving $r=\sqrt{2}$ in
Theorem~\ref{thm:LPbounds} when $n=8$ and $r=2$ when $n=24$.  (These values
of $r$ are the minimal vector lengths in $E_8$ and $\Lambda_{24}$,
respectively.)  This is exactly what has now been proved, and the proof
simply amounts to constructing an appropriate auxiliary function. Linear
programming bounds do not seem to be sharp for any other $n>2$, which makes
these two cases truly remarkable.

% Figure and table would make more sense in previous section, but putting them
% here gets them a page by themselves.
\begin{figure}
\begin{center}
\begin{tikzpicture}[scale=1.2]
\foreach \x in {1,2,3,4,5,6,7,8,9} \draw (\x,-0.15)--(\x,0.15);
\foreach \x in {1,2,3,4,5,6,7,8,9,10,11,12,13,14,15,16,17,18,19,20,21,22,23,24,25,26,27,28,29,30,31,32,33,34,35,36} \draw ({0.25*\x},-0.1)--({0.25*\x},0.1);
\foreach \y in {-14,-13,-12,-11,-10,-9,-8,-7,-6,-5,-4,-3,-2,-1,0,1} \draw (-0.1,{0.5*\y})--(0.1,{0.5*\y});
\draw (4.5,0.6) node[above] {dimension};
\draw (-0.5,-3.5) node[rotate=90] {$\log(\text{density})$};
\draw (5.15,-0.9) node {linear programming bound};
\draw (4.25,-3.55) node {best packing known};
\draw (1,0.15) node[above] {$4$};
\draw (2,0.15) node[above] {$8$};
\draw (3,0.15) node[above] {$12$};
\draw (4,0.15) node[above] {$16$};
\draw (5,0.15) node[above] {$20$};
\draw (6,0.15) node[above] {$24$};
\draw (7,0.15) node[above] {$28$};
\draw (8,0.15) node[above] {$32$};
\draw (9,0.15) node[above] {$36$};
\draw (-0.1,0.5) node[left] {$1$};
\draw (-0.1,0) node[left] {$0$};
\draw (-0.1,-7) node[left] {$-14$};
\draw (0,-7.25)--(0,0.5);
\draw (0,0)--(9,0);
\draw (0.25,0)--(0.50000,-0.048862)--(0.75000,-0.12439)--(1.0000,-0.21716)--(1.2500,-0.32220)--(1.5000,-0.43653)
--(1.7500,-0.55820)--(2.0000,-0.68586)--(2.2500,-0.81852)--(2.5000,-0.95543)--(2.7500,-1.0960)--(3.0000,-1.2398)
--(3.2500,-1.3864)--(3.5000,-1.5356)--(3.7500,-1.6871)--(4.0000,-1.8406)--(4.2500,-1.9960)--(4.5000,-2.1531)
--(4.7500,-2.3118)--(5.0000,-2.4719)--(5.2500,-2.6334)--(5.5000,-2.7962)--(5.7500,-2.9602)--(6.0000,-3.1252)
--(6.2500,-3.2913)--(6.5000,-3.4584)--(6.7500,-3.6264)--(7.0000,-3.7952)--(7.2500,-3.9649)--(7.5000,-4.1354)
--(7.7500,-4.3066)--(8.0000,-4.4785)--(8.2500,-4.6510)--(8.5000,-4.8243)--(8.7500,-4.9981)--(9.0000,-5.1725);
\draw (0.25,0)--(0.50000,-0.048853)--(0.75000,-0.15022)--(1.0000,-0.24156)--(1.2500,-0.38257)--(1.5000,-0.49315)
--(1.7500,-0.60989)--(2.0000,-0.68586)--(2.2500,-0.96289)--(2.5000,-1.1532)--(2.7500,-1.3572)--(3.0000,-1.5033)
--(3.2500,-1.7666)--(3.5000,-1.9170)--(3.7500,-2.0415)--(4.0000,-2.1097)--(4.2500,-2.3659)--(4.5000,-2.5442)
--(4.7500,-2.7458)--(5.0000,-2.8428)--(5.2500,-3.0026)--(5.5000,-3.0056)--(5.7500,-3.1316)--(6.0000,-3.1252)
--(6.2500,-3.6488)--(6.5000,-4.1100)--(6.7500,-4.3777)--(7.0000,-4.5825)--(7.2500,-5.1425)--(7.5000,-5.3642)
--(7.7500,-5.6721)--(8.0000,-5.7070)--(8.2500,-6.1973)--(8.5000,-6.6231)--(8.7500,-6.9354)--(9.0000,-7.1499);
\end{tikzpicture}
\end{center}
\caption{The logarithm of sphere packing density as a function of dimension.  The upper curve is the numerically
optimized
linear programming bound, while the lower curve is the best packing currently known.  The truth lies somewhere in between.}
\label{fig:LPgraph}
\end{figure}
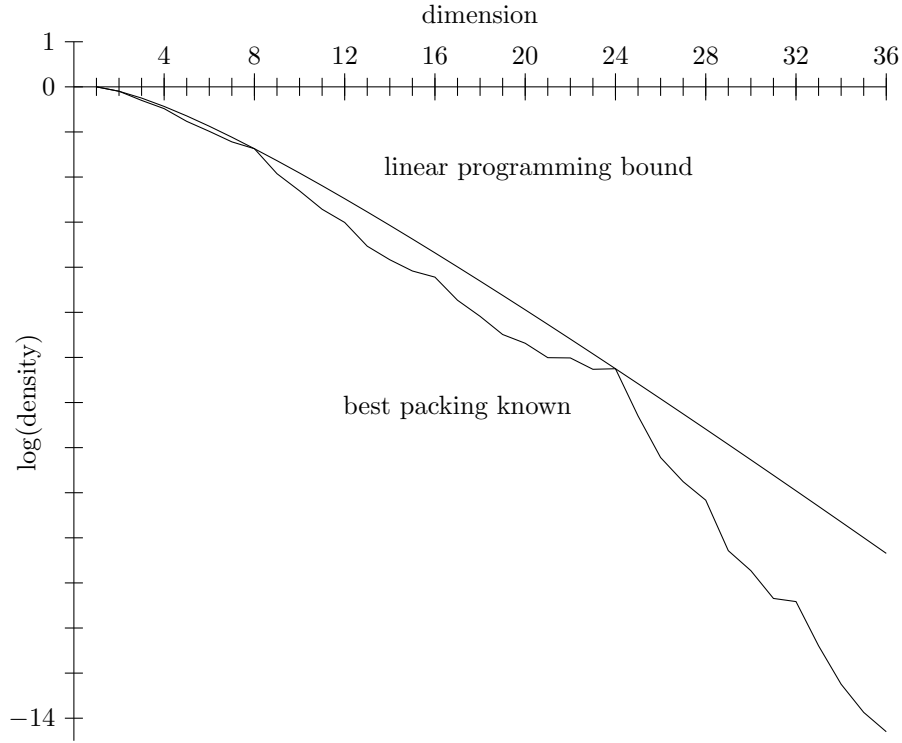

\begin{table}
\caption{The linear programming bound for the sphere packing density in $\R^n$ with $1 \le n \le 36$.  All numbers are rounded up.} \label{table:lpbound}
\begin{center}
\begin{tabular}{cccccccc}
\toprule
$n$ & upper bound & & $n$ & upper bound & & $n$ & upper bound\\
\cmidrule{1-2} \cmidrule{4-5} \cmidrule{7-8}
$1$ & $1.000000000$ & & $13$ & $0.0624817002$ & & $25$ & $0.001384190723$\\
$2$ & $0.906899683$ & & $14$ & $0.0463644893$ & & $26$ & $0.000991023890$\\
$3$ & $0.779746762$ & & $15$ & $0.0342482621$ & & $27$ & $0.000708229796$\\
$4$ & $0.647704966$ & & $16$ & $0.0251941308$ & & $28$ & $0.000505254217$\\
$5$ & $0.524980022$ & & $17$ & $0.0184640904$ & & $29$ & $0.000359858186$\\
$6$ & $0.417673416$ & & $18$ & $0.0134853405$ & & $30$ & $0.000255902875$\\
$7$ & $0.327455611$ & & $19$ & $0.0098179552$ & & $31$ & $0.000181708382$\\
$8$ & $0.253669508$ & & $20$ & $0.0071270537$ & & $32$ & $0.000128843289$\\
$9$ & $0.194555339$ & & $21$ & $0.0051596604$ & & $33$ & $0.000091235604$\\
$10$ & $0.147953479$ & & $22$ & $0.0037259420$ & & $34$ & $0.000064522197$\\
$11$ & $0.111690766$ & & $23$ & $0.0026842799$ & & $35$ & $0.000045574385$\\
$12$ & $0.083775831$ & & $24$ & $0.0019295744$ & & $36$ & $0.000032153056$\\
\bottomrule
\end{tabular}
\end{center}
\end{table}

The existence of these magic functions was conjectured by Cohn and Elkies
\cite{CohnElkies2003} on the basis of numerical evidence and analogies with
other problems in coding theory.  Further evidence was obtained by Cohn and
Kumar \cite{CohnKumar2009} in the course of proving that the Leech lattice is
the densest lattice in $\R^{24}$, while Cohn and Miller \cite{CohnMiller2016}
carried out an even more detailed study of the magic functions.  These
calculations left no doubt that the magic functions existed: one could
compute them to fifty decimal places, plot them, approximate their roots and
power series coefficients, etc. They were perfectly concrete and accessible
functions, amenable to exploration and experimentation, which indeed
uncovered various intriguing patterns. All that was missing was an existence
proof.

However, proving existence was no easy matter.  There was no sign of an
explicit formula, or any other characterization that could lead to a proof.
Instead, the magic functions seemed to come out of nowhere.

The fundamental difficulty is explaining where the magic comes from. One can
optimize the auxiliary function in any dimension, but that will generally not
produce a sharp bound for the packing density. Why should eight and
twenty-four dimensions be any different? The numerical results show that the
bound is nearly sharp in those dimensions, but why couldn't it be exact for a
hundred decimal places, followed by random noise?  That's not a plausible
scenario for anyone with faith in the beauty of mathematics, but faith does
not amount to a proof, and any proof must take advantage of special
properties of these dimensions.

For comparison, the answer is far less nice in sixteen dimensions. By analogy
with $r=\sqrt{2}$ when $n=8$ and $r=2$ when $n=24$, one might guess that
$r=\sqrt{3}$ when $n=16$, but that bound cannot be achieved.  Instead,
numerical optimization seems to converge to $r^2 = 3.0252593116828820\dots$,
which is close to $3$ but not equal to it.  This number has not yet been
identified exactly.

Despite the lack of an existence proof, the proof of
Theorem~\ref{thm:LPbounds} implicitly describes what the magic functions must
look like:

\begin{lemma}
Suppose $f$ satisfies the hypotheses of the linear programming bounds for
sphere packing in $\R^n$, with $f(x) \le 0$ for $|x| \ge r$, and suppose
$\Lambda$ is a lattice in $\R^n$ with minimal vector length $r$. Then the
density of $\Lambda$ equals the bound $\vol{\big(B_{r/2}^n\big)}$ from
Theorem~\ref{thm:LPbounds} if and only if $f$ vanishes on
$\Lambda\setminus\{0\}$ and $\widehat{f}$ vanishes on
$\Lambda^*\setminus\{0\}$.
\end{lemma}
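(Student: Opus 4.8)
The plan is to reopen the chain of inequalities in the proof of Theorem~\ref{thm:LPbounds}, specialized to the lattice $\Lambda$, and record exactly when each step is tight. Since the density of $\Lambda$ equals $\vol{\big(B_{r/2}^n\big)}/\!\vol{(\R^n/\Lambda)}$ and the bound in the theorem is $\vol{\big(B_{r/2}^n\big)}$, the density attains the bound precisely when $\vol{(\R^n/\Lambda)} = 1$. Poisson summation \eqref{eq:poisson} gives
\[
f(0) + \sum_{x \in \Lambda \setminus \{0\}} f(x)
= \frac{1}{\vol{(\R^n/\Lambda)}}\left(\widehat{f}(0) + \sum_{y \in \Lambda^* \setminus \{0\}} \widehat{f}(y)\right),
\]
and the two one-sided bounds used in the original proof say that the left side is at most $f(0)$ and the right side is at least $\widehat{f}(0)/\!\vol{(\R^n/\Lambda)}$; since $f(0)=\widehat{f}(0)>0$, chaining them yields $\vol{(\R^n/\Lambda)} \ge 1$. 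The key observation is that $\vol{(\R^n/\Lambda)} = 1$ if and only if \emph{both} of these one-sided bounds hold with equality.

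Next I would pin down each equality condition separately, and here the minimal vector length hypothesis enters. Because the shortest nonzero vector of $\Lambda$ has length $r$, every $x \in \Lambda\setminus\{0\}$ satisfies $|x| \ge r$, so the hypothesis $f(x)\le 0$ for $|x|\ge r$ forces every term of $\sum_{x\in\Lambda\setminus\{0\}} f(x)$ to be nonpositive. A sum of nonpositive numbers vanishes exactly when each summand does, so the left-side bound is an equality if and only if $f$ vanishes on $\Lambda\setminus\{0\}$ — note that this in particular compels $f$ to have genuine zeros, not merely nonpositive values, at the minimal vectors themselves. Symmetrically, $\widehat{f}(y)\ge 0$ for all $y$ makes every term of $\sum_{y\in\Lambda^*\setminus\{0\}}\widehat{f}(y)$ nonnegative, so the right-side bound is an equality if and only if $\widehat{f}$ vanishes on $\Lambda^*\setminus\{0\}$. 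Combining the two, $\vol{(\R^n/\Lambda)}=1$ — equivalently, $\Lambda$ attains the bound — if and only if $f$ vanishes on $\Lambda\setminus\{0\}$ and $\widehat{f}$ vanishes on $\Lambda^*\setminus\{0\}$, which is exactly the assertion.

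There is no serious obstacle in this argument; the only point requiring care is the logical bookkeeping of \emph{both} inequalities being simultaneously tight, together with the recognition that the minimal-vector-length hypothesis is precisely what places every nonzero point of $\Lambda$ in the region $\{\,|x|\ge r\,\}$ where $f\le 0$. It is worth emphasizing how strong this constraint is in the cases of interest: for $\Lambda = E_8$ (so $r=\sqrt2$ and $\Lambda^*=\Lambda$) it forces a putative magic function $f$ to vanish at every distance $\sqrt{2k}$ occurring in $E_8$, and $\widehat{f}$ to vanish at those same distances, and since $f\le 0$ on \emph{both} sides of each such distance beyond the minimal one, those zeros must be at least double; the analogous rigidity holds for $\Lambda_{24}$ in $\R^{24}$. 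This is the template that Viazovska's explicit construction must, and does, match.
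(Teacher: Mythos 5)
Your proposal is correct and follows essentially the same route as the paper: reopen the chain $f(0) \ge \sum_{x \in \Lambda} f(x) = \vol{(\R^n/\Lambda)}^{-1}\sum_{y \in \Lambda^*}\widehat{f}(y) \ge \widehat{f}(0)/\!\vol{(\R^n/\Lambda)}$ and note that sharpness forces every dropped term (each of a fixed sign, thanks to the minimal vector length hypothesis) to vanish, and conversely. Your write-up merely spells out the sign bookkeeping and the equivalence with $\vol{(\R^n/\Lambda)}=1$ more explicitly than the paper does.
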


\begin{proof}
Recall that the proof of Theorem~\ref{thm:LPbounds} for a lattice $\Lambda$
amounted to dropping all the nontrivial terms in the Poisson summation
formula, to obtain the inequality
\[
f(0) \ge \sum_{x \in \Lambda} f(x) = \frac{1}{\vol{(\R^n/\Lambda)}} \sum_{y \in \Lambda^*} \widehat{f}(y)
\ge \frac{\widehat{f}(0)}{\vol{(\R^n/\Lambda)}}.
\]
The only way this argument could yield a sharp bound is if all the omitted
terms were already zero.  In other words, $f$ proves that $\Lambda$ is an
optimal sphere packing if and only if $f$ vanishes on $\Lambda\setminus\{0\}$
and $\widehat{f}$ vanishes on $\Lambda^*\setminus\{0\}$.
\end{proof}

As discussed in the previous section, without loss of generality we can
assume that $f$ is a radial function, as is $\widehat{f}$. We know exactly
where the roots of $f$ and $\widehat{f}$ should be, since $E_8=E_8^*$ with
vector lengths $\sqrt{2k}$ for $k=1,2,\dots$, while
$\Lambda_{24}=\Lambda_{24}^*$ with vector lengths $\sqrt{2k}$ for
$k=2,3,\dots$.  These roots should have order two, to avoid sign changes,
except that the first root of $f$ should be a single root.  See
Figure~\ref{fig:schematic} for a diagram.

\begin{figure}
\begin{center}
\begin{tikzpicture}[scale=1.2]
\draw (0,-0.75) -- (0,1.75);
\draw (0,0) -- (4.5,0);
\draw (2.25,1) node {$f$};
\draw (0,1.5) to[out=0,in=106] (1,0) to[out=286,in=180] (1.4,-0.5)
to[out=0,in=180] (2,0) to[out=0,in=180] (2.5,-0.25)
to[out=0,in=180] (3,0) to[out=0,in=180] (3.5,-0.125)
to[out=0,in=180] (4,0) to[out=0,in=180] (4.5,-0.0625);
\draw (1,-0.1) -- (1,0.1); \draw (0.75,-0.1) node[below] {$\sqrt{2}$};
\draw (2,-0.1) -- (2,0.1); \draw (2,-0.1) node[below] {$\sqrt{4}$};
\draw (3,-0.1) -- (3,0.1); \draw (3,-0.1) node[below] {$\sqrt{6}$};
\draw (4,-0.1) -- (4,0.1); \draw (4,-0.1) node[below] {$\sqrt{8}$};
\end{tikzpicture}
\hskip 1cm
\begin{tikzpicture}[scale=1.2]
\draw (0,-0.75) -- (0,1.75);
\draw (0,0) -- (4.5,0);
\draw (2.25,1) node {$\widehat{f}$};
\draw (0,1.5) to[out=0,in=180] (1,0) to[out=0,in=180] (1.5,0.4)
to[out=0,in=180] (2,0) to[out=0,in=180] (2.5,0.2)
to[out=0,in=180] (3,0) to[out=0,in=180] (3.5,0.1)
to[out=0,in=180] (4,0) to[out=0,in=180] (4.5,0.05);
\draw (1,-0.1) -- (1,0.1); \draw (1,-0.1) node[below] {$\sqrt{2}$};
\draw (2,-0.1) -- (2,0.1); \draw (2,-0.1) node[below] {$\sqrt{4}$};
\draw (3,-0.1) -- (3,0.1); \draw (3,-0.1) node[below] {$\sqrt{6}$};
\draw (4,-0.1) -- (4,0.1); \draw (4,-0.1) node[below] {$\sqrt{8}$};
\end{tikzpicture}
\end{center}
\caption{A schematic diagram showing the roots of the magic
function $f$ and its Fourier transform $\widehat{f}$ in eight dimensions.
The figure is not to scale, because the actual functions
decrease too rapidly for an accurate plot to be illuminating.}
\label{fig:schematic}
\end{figure}
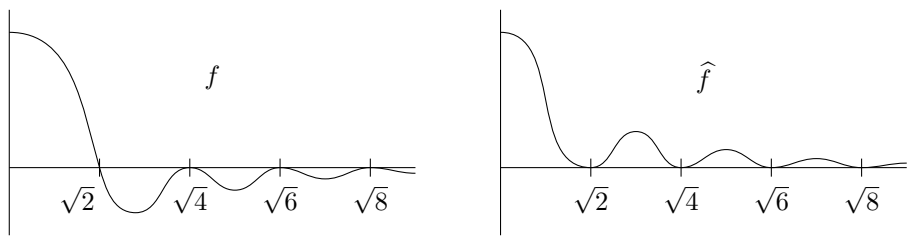

Thus, our problem is simple to state: how can we construct a radial Schwartz
function $f$ such that $f$ and $\widehat{f}$ have the desired roots and no
others?  Note that Poisson summation over $E_8$ or $\Lambda_{24}$ then
implies that $f(0) = \widehat{f}(0)$, and flipping the sign of $f$ if
necessary ensures that all the necessary inequalities hold.

Unfortunately it's difficult to take advantage of this characterization. The
problem is that it's hard to control a function and its Fourier transform
simultaneously: it's easy to produce the desired roots in either one
separately, but not at the same time.  Our inability to control $f$ without
losing control of $\widehat{f}$ is at the root of the Heisenberg uncertainty
principle, and it's a truly fundamental obstacle.

One natural way to approach this problem is to carry out numerical
experiments. Cohn and Miller used functions of the form $f(x) = p(|x|^2)
e^{-\pi |x|^2}$ to approximate the magic functions, where $p$ is a polynomial
chosen to force $f$ and $\widehat{f}$ to have many of the desired roots. Such
an approximation can never be exact, since it has only finitely many roots,
but it can come arbitrarily close to the truth. This investigation uncovered
several noteworthy properties of the magic functions, which showed that they
had unexpected structure. For example, if we normalize the magic functions
$f_8$ and $f_{24}$ in $8$ and $24$ dimensions so that $f_8(0)=f_{24}(0)=1$,
then Cohn and Miller conjectured that their second Taylor coefficients are
rational:
\begin{align*}
f_8(x) &= 1 - \frac{27}{10}|x|^2 + O\big(|x|^4\big), &\quad \widehat{f}_8(x) &= 1 - \frac{3}{2}|x|^2 + O\big(|x|^4\big),\\
f_{24}(x) &= 1 - \frac{14347}{5460}|x|^2 + O\big(|x|^4\big), &\quad \widehat{f}_{24}(x) &= 1 - \frac{205}{156}|x|^2 + O\big(|x|^4\big).
\end{align*}
If all the higher-order coefficients had been rational as well, then it would
have opened the door to determining these functions exactly, but
frustratingly it seems that the other coefficients are far more subtle and
presumably irrational. The magic functions retained their mystery, and this
Taylor series behavior went unexplained until the exact formulas for the
magic functions were discovered.

Given the difficulty of controlling $f$ and $\widehat{f}$ simultaneously, one
natural approach is to split them into eigenfunctions of the Fourier
transform. By Fourier inversion, every radial function $f$ satisfies
$\widehat{\widehat{f\,}}=f$. % ad hoc space
Thus, if we set $f_+ = \big(f+\widehat{f}\,\big)/2$ % ad hoc space
and $f_- = \big(f-\widehat{f}\,\big)/2$, % ad hoc space
then $f=f_++f_-$ with $\widehat{f}_+=f_+$ and $\widehat{f}_-=-f_-$.  Because
$f$ and $\widehat{f}$ vanish at the same points, they share these roots with
$f_+$ and $f_-$.  Our goal is therefore to construct radial eigenfunctions of
the Fourier transform with prescribed roots.  The advantage of this approach
is that it conveniently separates into two distinct problems, namely
constructing the $+1$ and $-1$ eigenfunctions, but these problems remain
difficult.

\section{Modular forms}
\label{sec:modular}

Ever since the Cohn-Elkies paper in 2003, number theorists had hoped to
construct the magic functions using modular forms. The reasoning is simple:
modular forms are deep and mysterious functions connected with lattices, as
are the magic functions, so wouldn't it make sense for them to be related?
Unfortunately, they are entirely different sorts of functions, with no clear
connection between them.  That's where matters stood until Viazovska
discovered a remarkable integral transform, which enabled her to construct
the magic functions using modular forms. We'll get there shortly, but first
let's briefly review how modular forms work.

We'll start with some examples.  Every lattice $\Lambda$ has a theta series
$\Theta_\Lambda$, defined by
\begin{equation} \label{eq:thetadef}
\Theta_\Lambda(z) = \sum_{x \in \Lambda} e^{\pi i |x|^2 z}.
\end{equation}
This series converges when $\Im z > 0$, and it defines an analytic function
on the upper half-plane $\UHP = \{z \in \C : \Im z > 0\}$.  To motivate the
definition, think of the theta series as a generating function, where the
coefficient of $e^{\pi i t z}$ counts the number of $x \in \Lambda$ with
$|x|^2=t$.  However, there's one aspect not explained by the generating
function interpretation: why write this function in terms of $e^{\pi i z}$?
Doing so may at first look like a gratuitous nod to Fourier series, but it
leads to an elegant transformation law based on applying Poisson summation to
a Gaussian:

\begin{proposition}
If $\Lambda$ is a lattice in $\R^n$, then
\[
\Theta_\Lambda(z) = \frac{1}{\vol{(\R^n/\Lambda)}}\left(\frac{i}{z}\right)^{n/2} \Theta_{\Lambda^*}(-1/z)
\]
for all $z \in \UHP$.
\end{proposition}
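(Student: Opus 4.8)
The plan is to apply Poisson summation \eqref{eq:poisson} not to a fixed function but to a Gaussian whose width is governed by the parameter $z$, and then recognize both sides as the theta series in the proposition. Concretely, for a parameter $t > 0$ consider the Gaussian $g_t(x) = e^{-\pi t |x|^2}$ on $\R^n$; this is a Schwartz function, so Poisson summation applies. The two ingredients I need are: (i) the Fourier transform of a Gaussian is again a Gaussian, specifically $\widehat{g_t}(y) = t^{-n/2} e^{-\pi |y|^2/t}$, which follows from the one-dimensional computation $\int_\R e^{-\pi t x^2} e^{-2\pi i x y}\,dx = t^{-1/2} e^{-\pi y^2/t}$ and the fact that $g_t$ factors as a product over coordinates; and (ii) the observation that $\sum_{x \in \Lambda} g_t(x) = \sum_{x \in \Lambda} e^{-\pi t |x|^2} = \Theta_\Lambda(it)$, directly from the definition \eqref{eq:thetadef} with $z = it$.

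The core of the argument is then just substitution. Plugging $g_t$ and $\widehat{g_t}$ into \eqref{eq:poisson} gives
\[
\Theta_\Lambda(it) = \sum_{x \in \Lambda} e^{-\pi t |x|^2}
= \frac{1}{\vol{(\R^n/\Lambda)}} \sum_{y \in \Lambda^*} t^{-n/2} e^{-\pi |y|^2/t}
= \frac{t^{-n/2}}{\vol{(\R^n/\Lambda)}} \, \Theta_{\Lambda^*}(i/t).
\]
Writing $z = it$, so that $-1/z = i/t$ and $t = z/i = -iz$, this reads
\[
\Theta_\Lambda(z) = \frac{1}{\vol{(\R^n/\Lambda)}} (-iz)^{-n/2} \Theta_{\Lambda^*}(-1/z)
= \frac{1}{\vol{(\R^n/\Lambda)}} \left(\frac{i}{z}\right)^{n/2} \Theta_{\Lambda^*}(-1/z),
\]
using $(-iz)^{-1} = i/z$. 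This is exactly the claimed identity for $z$ on the positive imaginary axis.

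The final step is to upgrade from the ray $z \in i\R_{>0}$ to all of $\UHP$ by analytic continuation. Both sides of the asserted equation are analytic functions of $z$ on $\UHP$: the series \eqref{eq:thetadef} converges locally uniformly there and hence defines an analytic function, and $\Theta_{\Lambda^*}(-1/z)$ is the composition of such a function with the Möbius map $z \mapsto -1/z$, which preserves $\UHP$; the prefactor $(i/z)^{n/2}$ is analytic on $\UHP$ once we fix the branch of the square root that is positive when $z$ is positive imaginary (so that $n/2$-th powers are well defined even for odd $n$). Since the two analytic functions agree on the positive imaginary axis, which has an accumulation point in $\UHP$, the identity theorem forces them to agree on all of $\UHP$.

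\textbf{The main obstacle.} The analytic content here is genuinely light — the real work is just applying Poisson summation to the Gaussian and bookkeeping the change of variables $z = it$. The one place to be careful is the branch of $(i/z)^{n/2}$ when $n$ is odd; one must check that the branch pinned down by the positive-imaginary-axis computation is consistent with a single-valued analytic choice on the simply connected domain $\UHP$, which it is. A secondary point worth a sentence is justifying that the Fourier transform of the $n$-dimensional Gaussian factors as a product of one-dimensional transforms, which is immediate from Fubini since everything is absolutely convergent. Neither of these is a serious difficulty, so the proof is short.
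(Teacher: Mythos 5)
Your proof is correct and takes essentially the same approach as the paper: apply Poisson summation to the Gaussian $x \mapsto e^{-\pi t |x|^2}$, recognize both sides as theta series, and set $z = it$. The only minor difference is where the analytic continuation occurs --- the paper continues the Gaussian Fourier-transform identity to complex $t$ with $\Re t > 0$ and then applies Poisson summation directly for all $z \in \UHP$, whereas you apply Poisson summation only for real $t > 0$ and then extend the resulting identity from the imaginary axis by the identity theorem; both variants are sound.
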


\begin{proof}
One of the most important properties of Gaussians is that the set of
Gaussians is closed under the Fourier transform: the Fourier transform of a
wide Gaussian is a narrow Gaussian, and vice versa.  More precisely, for
$t>0$ the Fourier transform of the Gaussian $x \mapsto e^{-t \pi |x|^2}$ on
$\R^n$ is $x \mapsto t^{-n/2} e^{-\pi |x|^2/t}$. In fact, the same holds
whenever $t$ is a complex number with $\Re t > 0$, by analytic continuation.
Then Poisson summation tells us that
\[
\sum_{x \in \Lambda} e^{-t \pi |x|^2} =
\frac{1}{\vol{(\R^n/\Lambda)}} \sum_{y \in \Lambda^*}  t^{-n/2} e^{-\pi |y|^2/t}.
\]
Setting $z = i t$, we find that
\[
\Theta_\Lambda(z) = \frac{1}{\vol{(\R^n/\Lambda)}}\left(\frac{i}{z}\right)^{n/2} \Theta_{\Lambda^*}(-1/z)
\]
whenever $\Im z > 0$, as desired.
\end{proof}

If we set $\Lambda = E_8$, then $\Lambda^* = E_8$ as well, and we find that
\[
\Theta_{E_8}(-1/z) = z^4 \Theta_{E_8}(z).
\]
Furthermore, $E_8$ is an even lattice, and hence the Fourier series
\eqref{eq:thetadef} implies that
\[
\Theta_{E_8}(z+1) = \Theta_{E_8}(z).
\]
These two symmetries are the most important properties of $\Theta_{E_8}$. For
exactly the same reasons, the theta series of the Leech lattice
$\Lambda_{24}$ satisfies
\[
\Theta_{\Lambda_{24}}(-1/z) = z^{12} \Theta_{\Lambda_{24}}(z) \qquad\text{and}\qquad
\Theta_{\Lambda_{24}}(z+1) = \Theta_{\Lambda_{24}}(z).
\]

The mappings $z \mapsto z+1$ and $z \mapsto -1/z$ generate a discrete group
of transformations of the upper half-plane, called the \emph{modular group}.
It turns out to be the same as the action of the group $\SL_2(\Z)$ on the
upper half-plane by linear fractional transformations, but we will not need
this fact except for naming purposes.

A \emph{modular form} of weight $k$ for $\SL_2(\Z)$ is a holomorphic function
$\varphi \colon \UHP \to \C$ such that $\varphi(z+1) = \varphi(z)$ and
$\varphi(-1/z) = z^k \varphi(z)$ for all $z \in \UHP$, while $\varphi(z)$
remains bounded as $\Im z \to \infty$. (The latter condition is called being
\emph{holomorphic at infinity}, because it means the singularity there is
removable.)  It's not hard to show that the weight of a nonzero modular form
must be nonnegative and even, and the only modular forms of weight zero are
the constant functions.

We have seen that $\Theta_{E_8}$ and $\Theta_{\Lambda_{24}}$ satisfy the
transformation laws for modular forms of weight $4$ and $12$, respectively,
and it is easy to check that they are holomorphic at infinity.  Thus, these
theta series are modular forms.

There are a number of other well-known modular forms.  For example, the
\emph{Eisenstein series} $E_k$ defined by
\[
E_k(z) = \frac{1}{2\zeta(k)}\sum_{\substack{(m,n)\in\Z^2\\(m,n)\ne (0,0)}}\frac{1}{(mz+n)^{k}}
\]
is a modular form of weight $k$ for $\SL_2(\Z)$ whenever $k$ is an even
integer greater than $2$ (while it vanishes when $k$ is odd).  The proofs of
the required identities $E_k(z+1)=E_k(z)$ and $E_k(-1/z) = z^k E_k(z)$ simply
amount to rearranging the sum. Here $\zeta$ denotes the Riemann zeta
function, and $2\zeta(k)$ is a normalizing factor. The advantage of this
normalization is that it leads to the Fourier expansion
\begin{equation} \label{eq:eisensteinfourier}
E_k(z) = 1 + \frac{2}{\zeta(1-k)} \sum_{m=1}^\infty \sigma_{k-1}(m) e^{2\pi i m z},
\end{equation}
where $\sigma_{k-1}(m)$ is the sum of the $(k-1)$-st powers of the divisors
of $m$ and $\zeta(1-k)$ turns out to be a rational number.

The notational conflict between the Eisenstein series $E_k$ and the $E_8$
lattice is unfortunate, but both notations are well established. Fortunately,
we will never need to set $k=8$, and the context should easily distinguish
between Eisenstein series and lattices.

Modular forms are highly constrained objects, which makes coincidences
commonplace.  For example, $\Theta_{E_8}$ is the same as $E_4$, because there
is a unique modular form of weight $4$ for $\SL_2(\Z)$ with constant term
$1$.  Equivalently, for $m=1,2,\dots$ there are exactly $240\sigma_3(m)$
vectors $x \in E_8$ with $|x|^2=2m$. The theta series $\Theta_{\Lambda_{24}}$
is not an Eisenstein series, but it can be written in terms of them as
\[
\Theta_{\Lambda_{24}} = \frac{7}{12}E_4^3+\frac{5}{12}E_6^2.
\]
More generally, let $\cM_k$ denote the space of modular forms of weight $k$
for $\SL_2(\Z)$. Then $\bigoplus_{k \ge 0} \cM_k$ is a graded ring, because
the product of modular forms of weights $k$ and $\ell$ is a modular form of
weight $k+\ell$. This ring is isomorphic to a polynomial ring on two
generators, namely $E_4$ and $E_6$.  In other words, the set
\[
\left\{E_4^i E_6^j : \text{$i,j \ge 0$ and $4i+6j = k$} \right\}
\]
is a basis for the modular forms of weight $k$.  In particular, there is no
modular form of weight $2$ for $\SL_2(\Z)$, because the weights of $E_4$ and
$E_6$ are too high to generate such a form.

One cannot obtain a modular form of weight $2$ by setting $k=2$ in the double
sum definition of $E_k$.  The problem is that rearranging the terms is
crucial for proving modularity, but when $k=2$ the series converges only
conditionally, not absolutely.  Instead, we can define $E_2$ using
\eqref{eq:eisensteinfourier}. That defines a merely quasimodular form, rather
than an actual modular form, because one can show that $E_2(-1/z) = z^2
E_2(z) - 6iz/\pi$ rather than $z^2 E_2(z)$. This imperfect Eisenstein series
will play a role in constructing the magic functions.

By default all modular forms are required to be holomorphic, but we can of
course consider quotients that are no longer holomorphic.  A
\emph{meromorphic modular form} is the quotient of two modular forms, and it
is \emph{weakly holomorphic} if it is holomorphic on $\UHP$ (but not
necessarily at infinity).  Unlike the holomorphic case, there is an
infinite-dimensional space of weakly holomorphic modular forms of each even
weight, positive or negative.  Allowing a pole at infinity offers tremendous
flexibility.

On the face of it, modular forms seem to have little to do with the magic
functions.  In particular, it's not clear what modular forms have to do with
the radial Fourier transform in $n$ dimensions.  One hint that they may be
relevant comes from the Laplace transform.  As we saw when we looked at theta
series, Gaussians are a particularly useful family of functions for which we
can easily compute the Fourier transform.  It's natural to define a function
$f$ as a continuous linear combination of Gaussians via
\[
f(x) = \int_0^\infty e^{-t \pi |x|^2} g(t) \, dt,
\]
where the weighting function $g(t)$ gives the coefficient of the Gaussian
$e^{-t \pi |x|^2}$.  This formula is simply the \emph{Laplace transform} of
$g$, evaluated at $\pi|x|^2$.

Assuming $g$ is sufficiently well behaved, we can compute $\widehat{f}$ by
interchanging the Fourier transform with the integral over $t$, which yields
\begin{align*}
\widehat{f}(y) &= \int_0^\infty t^{-n/2} e^{-\pi |y|^2/t} g(t) \, dt\\
&= \int_0^\infty e^{-t \pi |y|^2} t^{n/2-2} g(1/t) \, dt.
\end{align*}
In other words, taking the Fourier transform of $f$ amounts to replacing $g$
with $t \mapsto t^{n/2-2} g(1/t)$.

As a consequence, if $g(1/t) = \varepsilon t^{2-n/2} g(t)$ with $\varepsilon
= \pm 1$, then $\widehat{f} = \varepsilon f$.  Thus, we can construct
eigenfunctions of the Fourier transform by taking the Laplace transform of
functions satisfying a certain functional equation. What's noteworthy about
this functional equation is how much it looks like the transformation law for
a modular form on the imaginary axis.  If we set $g(t) = \varphi(it)$, then
the modular form equation $\varphi(-1/z) = z^k \varphi(z)$ with $z=it$
corresponds to $g(1/t) = i^k t^k g(t)$. If $\varphi$ is a meromorphic modular
form of weight $k=2-n/2$ that vanishes at $i\infty$ and has no poles on the
imaginary axis, then $f$ is a radial eigenfunction of the Fourier transform
in $\R^n$ with eigenvalue $i^k$.

Of course this is far from the only way to construct Fourier eigenfunctions,
but it's a natural way to construct them from modular forms.  As stated here,
it's clearly not flexible enough to construct the magic functions, because it
produces only one eigenvalue.  If we take $n=8$ and weight $k=2-n/2 = -2$,
then $i^k = -1$, so we can construct a $-1$ eigenfunction but not a $+1$
eigenfunction for the same dimension.  This turns out not to be a serious
obstacle: there are many variants of modular forms (for other groups or with
characters), and it's not hard to produce eigenfunctions with both
eigenvalues.  However, there's a much worse problem. If we build an
eigenfunction this way, then there's no obvious way to control the roots of
the eigenfunction using the Laplace transform.  Given that our goal is to
prescribe the roots, this approach seems to be useless.  What's holding us
back is that we have not taken full advantage of the modular form: we are
using only the identity $\varphi(-1/z) = z^k \varphi(z)$, and not
$\varphi(z+1)=\varphi(z)$.

\section{Viazovska's proof}

The fundamental problem with the Laplace transform approach in the previous
section is that it seems to be impossible to achieve the desired roots.
Viazovska gets around this difficulty by a bold construction: she simply
inserts the desired roots by brute force, by including an explicit factor of
$\sin^2\mathopen{}\big(\pi |x|^2/2\big)\mathclose{}$, which vanishes to
second order at $|x| = \sqrt{2k}$ for $k=1,2,\dots$ and fourth order at
$x=0$. In her construction for eight dimensions, both eigenfunctions have the
form
\begin{equation} \label{eq:viazovska-form}
\sin^2\mathopen{}\big(\pi |x|^2/2\big)\mathclose{} \int_0^\infty g(t) e^{-\pi |x|^2 t} \, dt
\end{equation}
for some function $g$.

One obvious issue with this approach is that $\sin^2\mathopen{}\big(\pi
|x|^2/2\big)\mathclose{}$ vanishes more often than we would like.
Specifically, it vanishes to fourth order when $x=0$ and second order when
$|x|=\sqrt{2}$, whereas we wish to have no root when $x=0$ and only a
first-order root when $|x|=\sqrt{2}$.  To avoid this difficulty, the integral
in \eqref{eq:viazovska-form} must have poles at $0$ and $\sqrt{2}$ as a
function of $|x|$, which cancel the unwanted roots. The integral will
converge only for $|x|>\sqrt{2}$, but the function defined by
\eqref{eq:viazovska-form} extends to $|x| \le \sqrt{2}$ by analytic
continuation.

Which choices of $g$ will produce eigenfunctions of the Fourier transform in
$\R^8$? This is not clear, because the factor of $\sin^2\mathopen{}\big(\pi
|x|^2/2\big)\mathclose{}$ disrupts the straightforward Laplace transform
calculations from the end of Section~\ref{sec:modular}. Instead, Viazovska
writes the sine function in terms of complex exponentials and carries out
elegant contour integral arguments to show that \eqref{eq:viazovska-form}
gives an eigenfunction whenever $g$ satisfies certain transformation laws.
Identifying the right conditions on $g$ is not at all obvious, and it's the
heart of her paper.

To get a $+1$ eigenfunction, Viazovska shows that it suffices to take $g(t) =
t^2 \varphi(i/t)$, where $\varphi$ is a weakly holomorphic quasimodular form
of weight $0$ and depth $2$ for $\SL_2(\Z)$.  Here, a \emph{quasimodular form
of depth $2$} is a quadratic polynomial in $E_2$ with modular forms as
coefficients, where $E_2$ is the Eisenstein series of weight $2$. Recall that
$E_2$ fails to be a modular form because of the strange transformation law
$E_2(-1/z) = z^2 E_2(z) - 6iz/\pi$, but that functional equation works
perfectly here.

To get a $-1$ eigenfunction, Viazovska shows that it suffices to take $g(t) =
\psi(it)$, where $\psi$ is a weakly holomorphic modular form of weight $-2$
for a subgroup of $\SL_2(\Z)$ called $\Gamma(2)$ and $\psi$ satisfies the
additional functional equation
\[
\psi(z) = \psi(z+1) + z^2\psi(-1/z).
\]
We have not discussed modular forms for other groups such as $\Gamma(2)$, but
they are similar in spirit to those for $\SL_2(\Z)$.  In particular, the ring
of modular forms for $\Gamma(2)$ is generated by two forms of weight $2$,
namely $\Theta_\Z^4$ (the fourth power of the theta series of the
one-dimensional integer lattice) and its translate $z\mapsto
\Theta_\Z^4(z+1)$.

These conditions for $\varphi$ and $\psi$ are every bit as arcane as they
look. It's far from obvious that they lead to eigenfunctions, but Viazovska's
contour integral proof shows that they do.  Even once we know that this
method gives eigenfunctions, it's unclear how to choose $\varphi$ and $\psi$
to yield the magic eigenfunctions, or whether this is possible at all.

Fortunately, one can write down some necessary conditions, and then the
simplest functions satisfying those conditions work perfectly. In particular,
we can take
\[
\varphi = \frac{4\pi(E_2E_4-E_6)^2}{5(E_6^2-E_4^3)}
\]
and
\[
\psi = -\frac{32\Theta_\Z^4|_T\big(5\Theta_\Z^8 - 5 \Theta_\Z^4|_T \Theta_\Z^4  + 2 \Theta_\Z^8|_T\big)}
{15 \pi \Theta_\Z^8\big(\Theta_\Z^4 - \Theta_\Z^4|_T\big)^2},
\]
where $f|_T$ denotes the translate $z \mapsto f(z+1)$ of a function $f$.

Thus, to obtain the magic function for $E_8$ we set
\begin{equation} \label{eq:magic-formula}
f(x) = \sin^2\mathopen{}\big(\pi |x|^2/2\big)\mathclose{}
\int_0^\infty \big(t^2 \varphi(i/t) + \psi(it)\big) e^{-\pi |x|^2 t} \, dt
\end{equation}
for the specific $\varphi$ and $\psi$ identified by Viazovska. Because the
$\varphi$ and $\psi$ terms yield eigenfunctions of the Fourier transform, we
find that
\[
\widehat{f}(y) = \sin^2\mathopen{}\big(\pi |y|^2/2\big)\mathclose{}
\int_0^\infty \big(t^2 \varphi(i/t) - \psi(it)\big) e^{-\pi |y|^2 t} \, dt.
\]
The integral in the formula for $f(x)$ converges only when $|x|>\sqrt{2}$,
but the one in the formula for $\widehat{f}(y)$ turns out to converge
whenever $|y|>0$, because the problematic growth of the integrand cancels in
the difference $t^2 \varphi(i/t) - \psi(it)$.

These formulas define Schwartz functions that have the desired roots, and one
can check that $f(0) = \widehat{f}(0)=1$, but it's not obvious that they
satisfy the inequalities $f(x) \le 0$ for $|x| \ge \sqrt{2}$ and
$\widehat{f}(y) \ge 0$ for all $y$, because there might be additional sign
changes. In fact, these inequalities hold for a fundamental reason:
\begin{equation} \label{eq:ineqs}
t^2 \varphi(i/t) + \psi(it) < 0 \qquad\text{and}\qquad t^2 \varphi(i/t) - \psi(it) > 0
\end{equation}
for all $t \in (0,\infty)$.  In other words, the inequalities already hold at
the level of the quasimodular forms, with no need to worry about the Laplace
transform except to observe that it preserves positivity.  Note that the
restriction of the inequality $f(x) \le 0$ to $|x| \ge \sqrt{2}$ fits
perfectly into this framework, because the integral in
\eqref{eq:magic-formula} diverges for $|x| < \sqrt{2}$ and thus we do not
obtain $f(x) \le 0$ there. All that remains is to prove the inequalities
\eqref{eq:ineqs}. Unfortunately, no simple proof of these inequalities is
known at present, but one can verify them by reducing the problem to a finite
calculation.

Thus, Viazovska's formula \eqref{eq:magic-formula} defines the long-sought
magic function for $E_8$ and solves the sphere packing problem in eight
dimensions. What about twenty-four dimensions?  The same basic approach
works, but choosing the quasimodular forms requires more effort. Fortunately,
the conjectures by Cohn and Miller can be used to help pin down the right
choices. Once the magic function has been identified, there are additional
technicalities involved in verifying the inequality for $\widehat{f}$, but
these challenges can be overcome, which leads to a solution of the sphere
packing problem in twenty-four dimensions.

\section{Future prospects}

Nobody expects Viazovska's proof to generalize to any other dimensions above
two. Why just eight and twenty-four?  At one level, we really don't know why.
Nobody has been able to find a proof, or even a compelling heuristic
argument, that rules out similar phenomena in higher dimensions. We can't
even rule out the possibility that linear programming bounds might solve the
sphere packing problem in every sufficiently high dimension, although that's
clearly ridiculous.

Despite our lack of understanding, the special role of eight and twenty-four
dimensions aligns with our experience elsewhere in mathematics. Mathematics
is full of exceptional or sporadic phenomena that occur in only finitely many
cases, and the $E_8$ and Leech lattices are prototypical examples. These
objects do not occur in isolation, but rather in constellations of remarkable
structures. For example, both $E_8$ and the Leech lattice are connected with
binary error-correcting codes, combinatorial designs, spherical designs,
finite simple groups, etc.  Each of these connections constrains the
possibilities, especially given the classification of finite simple groups,
and there just doesn't seem to be room for a similar constellation in higher
dimensions.

Instead, solving the sphere packing problem in further dimensions will
presumably require new techniques.  One particularly attractive case is the
$D_4$ root lattice, which is surely the best sphere packing in $\R^4$.  This
lattice shares some of the wonderful properties of $E_8$ and the Leech
lattice, but not enough for the four-dimensional linear programming bound to
be sharp. It would be a plausible target for any generalization of this
bound, and in fact such a generalization may be emerging.

Building on work of Schrijver, Bachoc and Vallentin, and other researchers,
de~Laat and Vallentin have generalized linear programming bounds to a
hierarchy of semidefinite programming bounds \cite{dLV2015}.  Linear
programming bounds are the first level of this hierarchy, which means that
$E_8$ and the Leech lattice have the simplest possible proofs from this
perspective.  What about $D_4$? Perhaps this case can be solved at one of the
next few levels of the hierarchy. Much work remains to be done here, and it's
unclear what the prospects are for any particular dimension, but it is not
beyond hope that four dimensions could someday join eight and twenty-four
among the solved cases of the sphere packing problem.

\section*{Acknowledgments}

I am grateful to James Bernhard, Donald Cohn, Matthew de Courcy-Ireland,
Stephen D.\ Miller, Frank Morgan, David Rohrlich, Achill Sch\"urmann, Frank
Vallentin, and Maryna Viazovska for their feedback and suggestions.

\section*{Photo Credits}

\noindent Figure~\ref{fig:maryna} is courtesy of Daniil Yevtushynsky.

\noindent The photos in Figure~\ref{fig:leech} are courtesy of Mary Caisley,
Mark Ostow, C.~J.~Mozzochi, and Julia Semikina, from left to right.

\noindent Figures~\ref{fig:low-dimensions} and~\ref{fig:noam} are courtesy of
Henry Cohn.

\noindent Figure~\ref{fig:steve-class} is courtesy of Matthew Kownacki.

\end{document}